\newcommand{\ud}{\,\mathrm{d}}
\newcommand{\R}{\mathbb{R}}
\newcommand{\ip}[3]{\left< {#1}, {#2} \right>_{#3}}
\newcommand{\comment}[1]{ }
\newcommand{\der}[2]{\frac{\ud {#1}}{\ud {#2}}}
\newcommand{\pder}[2]{\frac{\partial {#1}}{\partial {#2}}}
\newcommand{\dv}[1]{\mbox{div}\left( {#1} \right)}
\newcommand{\tr}[1]{\mbox{tr}\left({#1}\right)}
\newcommand{\diff}{\mbox{Diff}(\R^n)}
\newtheorem{definition}{Definition}
\newtheorem{theorem}{Theorem}[section]
\newtheorem{lemma}{Lemma}
\newtheorem{remark}{Remark}
\begin{document}

\title{Accelerated Optimization in the PDE Framework: Formulations for
  the Manifold of Diffeomorphisms}

\author{Ganesh Sundaramoorthi\thanks{KAUST (King Abdullah University
    of Science and Technology), {\tt ganesh.sundaramoorthi@kaust.edu.sa } }
  and 
  Anthony Yezzi\thanks{Georgia Institute of Technology, {\tt ayezzi@ece.gatech.edu} } }

\maketitle

\begin{abstract}
  We consider the problem of optimization of cost functionals on the
  infinite-dimensional manifold of diffeomorphisms. We present a new
  class of optimization methods, valid for any optimization problem
  setup on the space of diffeomorphisms by generalizing Nesterov
  accelerated optimization to the manifold of diffeomorphisms. While
  our framework is general for infinite dimensional manifolds, we
  specifically treat the case of diffeomorphisms, motivated by optical
  flow problems in computer vision. This is accomplished by building
  on a recent variational approach to a general class of accelerated
  optimization methods by Wibisono, Wilson and Jordan
  \cite{wibisono2016variational}, which applies in finite dimensions. We
  generalize that approach to infinite dimensional manifolds. We
  derive the surprisingly simple continuum evolution equations, which
  are partial differential equations, for accelerated gradient
  descent, and relate it to simple mechanical principles from fluid
  mechanics. Our approach has natural connections to the optimal mass
  transport problem. This is because one can think of our approach as
  an evolution of an infinite number of particles endowed with mass
  (represented with a mass density) that moves in an energy
  landscape. The mass evolves with the optimization variable, and
  endows the particles with dynamics. This is different than the
  finite dimensional case where only a single particle moves and hence
  the dynamics does not depend on the mass. We derive the theory,
  compute the PDEs for accelerated optimization, and illustrate the
  behavior of these new accelerated optimization schemes.
\end{abstract}

\section{Introduction}

Accelerated optimization methods have gained wide applicability within
the machine learning and optimization communities (e.g.,
\cite{Bubeck15,Flammarion15,Ghadimi16,Hu09,Ji09,Jojic10,Krichene15,Li15,Nesterov05,Nesterov08,Nesterov83}). They
are known for leading to optimal convergence rates among schemes that
use only gradient (first order) information in the convex case.  In
the non-convex case, they appear to provide robustness to shallow
local minima.  The intuitive idea is that by considering a particle
with mass that moves in an energy landscape, the particle will gain
momentum and surpass shallow local minimum and settle in in more
wider, deeper local extrema in the energy landscape. This property has
made them (in conjunction with stochastic search algorithms)
particularly useful in machine learning, especially in the training of
deep networks, where the optimization is a non-convex problem that is
riddled with local minima. These methods have so far have only been
used in optimization problems that are defined in finite
dimensions. In this paper, we consider the generalization of these
methods to infinite dimensional manifolds. We are motivated by
applications in computer vision, in particular, segmentation, 3D
reconstruction, and optical flow. In these problems, the optimization
is over infinite dimensional geometric quantities (e.g., curves,
surfaces, mappings), and so the problems are formulated on infinite
dimensional manifolds. Recently there has been interest within the
machine learning community in optimization on finite dimensional
manifolds, such as matrix groups, e.g.,
\cite{zhang2016riemannian,liu2017accelerated,hosseini2017alternative},
which have particular structure not available on infinite dimensional
manifolds that we consider here.

\comment{
Although these techniques have been widely used, it was only within
the last few years that theoretical attempts have been made to
understand them and put them in a mathematical framework.  

Motivated by the success of accelerated methods in
finite dimensional problems, we formulate optimization problems on
infinite dimensional manifolds and, in particular, the infinite
dimensional group of diffeomorphisms using a generalization of this
approach.
}

Recent work \cite{wibisono2016variational} has shown that the
continuum limit of accelerated methods, which are discrete
optimization algorithms, may be formulated with variational
principles. This allows one to derive the continuum limit of
accelerated optimization methods (Nesterov's optimization method
\cite{Nesterov83} and others) as an optimization problem on descent
paths. The resulting optimal continuum path is defined by an ODE,
which when discretized appropriately yields Nesterov's method and
other accelerated optimization schemes. The optimization problem on
paths is an action integral, which integrates the Bregman
Lagrangian. The Bregman Lagrangian is a time-explicit Lagrangian (from
physics) that consists of kinetic and potential energies. The kinetic
energy is defined using the Bregman divergence (see
Section~\ref{sec:variational_accelerated}); it is designed for finite
step sizes, and thus differs from classical action integrals in
physics \cite{arnol2013mathematical,marsden2013introduction}. The
potential energy is the cost function that is to be optimized.

We build on the approach of \cite{wibisono2016variational} by
formulating accelerated optimization with an action integral, but we
generalize that approach to infinite dimensional manifolds. Our
approach is general for infinite dimensional manifolds, but we
illustrate the idea here for the case of the infinite dimensional
manifold of diffeomorphisms of $\R^n$ (the case of the manifold of
curves has been recently formulated by the authors
\cite{YezziSun2017}). To do this, we abandon the Bregman Lagrangian
framework in \cite{wibisono2016variational} since that assumes that
the variable over which one optimizes is embedded in $\R^n$.

Instead, we adopt the classical formulation of action integrals in
physics \cite{arnol2013mathematical,marsden2013introduction}, which is
already general enough to deal with manifolds, and kinetic energies
that are defined through general Riemannian metrics rather than a
traditional Euclidean metric, thus by-passing the need for the use of
Bregman distances. Our approach requires consideration of additional
technicalities beyond that of \cite{wibisono2016variational} and
classical physics. Namely, in finite dimensions in $\R^n$, one can
think of accelerated optimization as a single particle with mass
moving in an energy landscape. Since only a single particle moves,
mass is a fixed constant that does not impact the dynamics of the
particle. However, in infinite dimensions, one can instead think of an
infinite number of particles each moving, and these masses of
particles is better modeled with a \emph{mass density}. In the case of
the manifold of diffeomorphisms of $\R^n$ this mass density exists in
$\R^n$. As the diffeomorphism evolves to optimize the cost functional,
it deforms $\R^n$ and redistributes the mass, and so the density
changes in time. Since the mass density defines the kinetic energy and
the stationary action path depends on the kinetic energy, the dynamics
of the evolution to minimize the cost functional depends on the way
that mass is distributed in $\R^n$. Therefore, in the infinite
dimensional case, one also needs to optimize and account for the mass
density, which cannot be neglected. Further, our approach, due to the
infinite dimensional nature, has evolution equations that are PDEs
rather than ODEs in \cite{wibisono2016variational}. Finally, the
discretization of the resulting PDEs requires the use of entropy
schemes \cite{sethian1999level} since our evolution equations are
defined as viscosity solutions of PDEs, required to treat shocks and
rarefaction fans. These phenomena appear not to be present in the
finite dimensional case.

\subsection{Related Work}

\subsubsection{Sobolev Optimization}
Our work is motivated by Sobolev gradient descent approaches
\cite{Sundaramoorthi05,
  beg2005computing,charpiat2005designing,sundaramoorthi2007sobolev,charpiat2007generalized,sundaramoorthi2008coarse,sundaramoorthi2009new,mennucci2008sobolev,sundaramoorthi2011new,yang2015shape}
for optimization problems on manifolds, which have been used for
segmentation and optical flow problems. These approaches are general
in that they apply to non-convex problems, and they are derived by
computing the gradient of a cost functional with respect to a Sobolev
metric rather than an $L^2$ metric typically assumed in variational
optimization problems. The resulting gradient flows have been
demonstrated to yield coarse-to-fine evolutions, where the
optimization automatically transitions from coarse to successively
finer scale deformations. This makes the optimization robust to local
minimizers that plague $L^2$ gradient descents. We should point out
that the Sobolev metric is used beyond optimization problems and have
been used extensively in shape analysis (e.g.,
\cite{klassen2004analysis,michor2007metric,micheli2013sobolev,bauer2014overview}). While
such gradient descents are robust to local minimizers, computing them
in general involves an expensive computation of an inverse
differential operator at each iteration of the gradient descent. In
the case of optimization problems on curves and a very particular form
of a Sobolev metric this can be made computationally fast
\cite{sundaramoorthi2007sobolev}, but the idea does not generalize
beyond curves. In this work, we aim to obtain robustness properties of
Sobolev gradient flows, but without the expensive computation of
inverse operators.  Our accelerated approach involves averaging the
gradient across time in the descent process, rather than an averaging
across space in the Sobolev case. Despite our goal of avoiding Sobolev
gradients for computational speed, we should mention that our
framework is general to allow one to consider accelerated Sobolev
gradient descents (although we do not demonstrate it here), where
there is averaging in both space and time. This can be accomplished by
changing the definition of kinetic energy in our approach. This could
be useful in applications where added robustness is needed but speed
is not a critical factor.

\subsubsection{Optimal Mass Transport}
Our work relates to the literature on the problem of \emph{optimal
  mass transportion}
(e.g., \cite{villani2003topics,gangbo1996geometry,angenent2003minimizing}),
especially the formulation of the problem in
\cite{benamou2000computational}. The modern formulation of the
problem, called the Monge-Kantorovich problem, is as follows. One is
given two probability densities $\rho_0,\rho_1$ in $\R^n$, and the
goal is to compute a transformation $M : \R^n\to\R^n$ so that the
pushforward of $\rho_0$ by $M$ results in $\rho_1$ such that $M$ has
minimal cost. The cost is defined as the average Euclidean norm of
displacement: $\int_{\R^n} |M(x)-x|^p \rho_0(x)\ud x$ where $p\geq
1$. The value of the minimum cost is a distance (called the $L^p$
Wasserstein distance) on the space of probability measures.  In the
case that $p=2$, \cite{benamou2000computational} has shown that mass
transport can be formulated as a fluid mechanics problem. In
particular, the Wasserstein distance can be formulated as a distance
arising from a Riemannian metric on the space of probability
densities. The cost can be shown equivalent to the minimum Riemannian
path length on the space of probability densities, with the initial
and final points on the path being the two densities $\rho_0,
\rho_1$. The tangent space is defined to be velocities of the density
that infinitesimally displace the density. The Riemannian metric is
just the kinetic energy of the mass distribution as it is displaced by
the velocity, given by $\int_{\R^n} \frac 1 2 \rho(x) |v(x)|^2 \ud
x$. Therefore, optimal mass transport computes an optimal \emph{path}
on densities that minimizes the integral of kinetic energy along the
path.

In our work, we seek to minimize a potential on the space of
diffeomorphisms, with the use of acceleration. We can imagine that
each diffeomorphism is associated with a point on a manifold, and the
goal is to move to the bottom of the potential well. To do so, we
associate a mass density in $\R^n$, which as we optimize the
potential, moves in $\R^n$ via a push-forward of the evolving
diffeomorphism. We regard this evolution as a path in the space of
diffeomorphisms that arises from an action integral, where the action
is the difference of the kinetic and potential energies. The kinetic
energy that we choose, purely to endow the diffeomorphism with
acceleration, is the same one used in the fluid mechanics formulation
of optimal mass transportation for $p=2$. We have chosen this kinetic
energy for simplicity to illustrate our method, but we envision a
variety of kinetic energies can be defined to introduce different
dynamics. The main difference of our approach to the fluid mechanics
formulation of mass transport is in the fact that we do not minimize
just the path integral of the kinetic energy, but rather we derive our
method by computing stationary paths of the path integral of kinetic
minus \emph{potential} energies. Since diffeomorphisms are generated
by smooth velocity fields, we equivalently optimize over
velocities. We also optimize over the mass distribution. Thus, the
main difference between the fluid mechanics formulations of $L^2$ mass
transport and our approach is the potential on diffeomorphisms, which
is used to define the action integral.

\subsubsection{Diffeomorphic Image Registration} 
Our work relates to the literature on diffeomorphic image registration
\cite{beg2005computing,miller2006geodesic}, where the goal, similar to
ours, is to compute a registration between two images as a
diffeomorphism. There a diffeomorphism is generated by a path of
smooth velocity fields integrated over the path. Rather than
formulating an optimization problem directly on the diffeomorphism,
the optimization problem is formed on a path of velocity fields. The
optimization problem is to minimize $\int_0^1 \| v\|^2 \ud t$ where $v$
is a time varying vector field, $\|\cdot\|$ is a norm on velocity
fields, and the optimization is subject to the constraint that the
mapping $\phi$ maps one image to the other, i.e.,
$I_1 = I_0\circ\phi^{-1}$. The minimization can be considered the
minimization of an action integral where the action contains only a
kinetic energy. The norm is chosen to be a Sobolev norm to ensure that
the generated diffeomorphism (by integrating the velocity fields over
time) is smooth. The optimization problem is solved in
\cite{beg2005computing} by a Sobolev gradient descent on the
\emph{space of paths}. The resulting path is a geodesic with
Riemannian metric given by the Sobolev metric $\|v\|$. In
\cite{miller2006geodesic}, it is shown these geodesics can be computed
by integrating a forward evolution equation, determined from the
conservation of momentum, with an initial velocity.

Our framework instead uses accelerated gradient descent. Like
\cite{beg2005computing,miller2006geodesic}, it is derived from an
action integral, but the action has both a kinetic energy and a
\emph{potential} energy, which is the objective functional that is to
be optimized. In this current work, our kinetic energy arises
naturally from physics rather than a Sobolev norm. One of our
motivations in this work is to get regularizing effects of Sobolev
norms without using Sobolev norms, since that requires inverting
differential operators in the optimization, which is computationally
expensive. Our kinetic energy is an $L^2$ metric weighted by
\emph{mass}. Our method has acceleration, rather than zero
acceleration in \cite{beg2005computing,miller2006geodesic}, and this
is obtained by endowing a diffeomorphism with mass, which is a mass
density in $\R^n$. This mass allows for the kinetic energy to endow
the optimization with dynamics. Our optimization is obtained as the
stationary conditions of the action with respect to both velocity and
\emph{mass density}. The latter links our approach to optimal mass
transport, described earlier. Our physically motivated kinetic energy
and in particular the mass consideration allows us to generate
diffeomorphisms without the use fo Sobolev norms. We also avoid the
inversion of differential operators.

\subsubsection{Optical Flow}
Although our framework is general in solving any optimization on
infinite dimensional manifolds, we demonstrate the framework for
optimization of diffeomorphisms and specifically for optical flow
problems formulated as variational problems in computer vision (e.g.,
\cite{horn1981determining,black1996robust,brox2004high,wedel2009improved,sun2010secrets,yang2013modeling,yang2015self}). Optical
flow, i.e., determining pixel-wise correspondence between images, is a
fundamental problem in computer vision that remains a challenge to
solve, mainly because optical flow is a non-convex optimization
problem, and thus few methods exist to optimize such problems. Optical
flow was first formulated as a variational problem in
\cite{horn1981determining}, which consisted of a data fidelity term
and regularization favoring smooth optical flow. Since the problem is
non-convex, approaches to solve this problem typically involve the
assumption of small displacement between frames, so a linearization of
the data fidelity term can be performed, and this results in a problem
in which the global optimum of \cite{horn1981determining} can be
solved via the solution of a linear PDE. Although standard gradient
descent could be used on the non-linearized problem, it is numerically
sensitive, extremely computationally costly, and does not produce
meaningful results unless coupled with the strategy described next.
Large displacements are treated with two strategies: iterative warping
and image pyramids. Iterative warping involves iteration of the
linearization around the current accumulated optical flow. By use of
image pyramids, a large displacement is converted to a smaller
displacement in the downsampled images. While this strategy is
successful in many cases, there are also many problems associated with
linearization and pyramids, such as computing optical flow of thin
structures that undergo large displacements.  This basic strategy of
linearization, iterative warping and image pyramids have been the
dominant approach to many variational optical flow models (e.g.,
\cite{horn1981determining,black1996robust,brox2004high,wedel2009improved,sun2010secrets}),
regardless of the regularization that is used (e.g., use of robust
norms, total variation, non-local norms, etc). In
\cite{wedel2009improved}, the linearized problem with TV
regularization has been formulated as a convex optimization problem,
in which a primal-dual algorithm can be used. In \cite{yang2015shape}
linearization is avoided and rather a gradient descent with respect to
a Sobolev metric is computed, and is shown to have a automatic
coarse-to-fine optimization behavior. Despite these works, most
optical flow algorithms involve simplification of the problem into a
linear problem. In this work, we construct accelerated gradient
descent algorithms that are applicable to any variational optical flow
algorithm in which we avoid the linearization step and aim to obtain a
better optimizer. For illustration, we consider here the case of
optical flow modeled as a global diffeomorphism, but in principle this
can be generalized to piecewise diffeomorphisms as in
\cite{yang2015self}. Since diffeomorphisms do not form a linear space,
rather a infinite-dimensional manifold, we generalize accelerated
optimization to that space.

\section{Background for Accelerated Optimization on Manifolds}

\subsection{Manifolds and Mechanics}

We briefly summarize the key facts in classical mechanics that are the
basis for our accelerated optimization method on manifolds.

\subsubsection{Differential Geometry}
We review differential geometry (from \cite{do1992riemannian}), as
this will be needed to derive our accelerated optimization scheme on
the \emph{manifold} of diffeomorphisms. First a \emph{manifold} $M$ is
a space in which every point $p\in M$ has a (invertible) mapping $f_p$
from a neighborhood of $p$ to a \emph{model space} that is a linear
normed vector space, and has an additional compatibility condition
that if the neighborhoods for $p$ and $q$ overlap then the mapping
$f_p\circ f_{q}^{-1}$ is differentiable. Intuitively, a manifold is a
space that locally appears flat. The model space may be finite or
infinite dimensional when the model spaces are finite or infinite
dimensional, respectively. In the latter case the manifold is referred
to as an \emph{infinite dimensional manifold} and in the former case a
\emph{finite dimensional manifold}. The space of diffeomorphisms of
$\R^n$, the space of interest in this paper, is an infinite
dimensional manifold. The \emph{tangent space} at a point $p\in M$ is
the equivalence class, $[\gamma]$, of curves $\gamma : [0,1] \to M$
under the equivalence that $\gamma(0)=p$ and $(f_p\circ \gamma)'(0)$
are the same for each curve $\gamma\in [\gamma]$. Intuitively, these
are the set of possible directions of movement at the point $p$ on the
manifold. The \emph{tangent bundle}, denoted $TM$, is
$TM = \{ (p,v) \,:\, p\in M, v \in T_pM\}$, i.e., the space formed
from the collection of all points and tangent spaces.

In this paper, we will assume additional structure on the manifold,
namely, that an inner product (called the \emph{metric}) exists on
each tangent space $T_pM$. Such a manifold is called a
\emph{Riemannian manifold}. A Riemannian manifold allows one to
formally define the lengths of curves $\gamma : [-1,1]\to M$ on the
manifold. This allows one to construct paths of critical length,
called \emph{geodesics}, a generalization of a path on constant
velocity on the manifold. Note that while existence of geodesics is
guaranteed on finite dimensional manifolds, in the infinite
dimensional case, there is no such guarantee. The Riemannian metric
also allows one to define \emph{gradients} of functions $g : M \to \R$
defined on the manifold: the gradient $\nabla g(p) \in T_pM$ is
defined to be the vector that satisfies
$\der{}{\varepsilon} \left. g( \gamma(\varepsilon) ) \right|_{
  \varepsilon = 0 } = \ip{ \nabla g(p) }{ \gamma'(0) }{} $, where
$\gamma(0)=p$, the left hand side is the directional derivative and
the right hand side is the inner product from the Riemannian
structure.

\subsubsection{Mechanics on Manifolds}
We now briefly review some of the formalism of classical mechanics on
manifolds that will be used in this paper. The material is from
\cite{arnol2013mathematical,marsden2013introduction}. The subject of
mechanics describes the principles governing the evolution of a
particle that moves on a manifold $M$. The equations governing a
particle are Newton's laws. There are two viewpoints in mechanics,
namely the \emph{Lagrangian} and \emph{Hamiltonian} viewpoints, which
formulate more general principles to derive Newton's equations. In
this paper, we use the Lagrangian formulation to derive equations of
motion for accelerated optimization on the manifold of
diffeomorphisms. Lagrangian mechanics obtains equations of motion
through \emph{variational principles}, which makes it easier to
generalize Newton's laws beyond simple particle systems in $\R^3$,
especially to the case of manifolds. In Lagrangian mechanics, we start
with a function $L : TM \to \R$, called the Lagrangian, from the
tangent bundle to the reals. Here we assume that $M$ is a Riemannian
manifold. One says that a curve $\gamma : [-1,1] \to M$ is \emph{a
  motion in a Lagrangian system} with Lagrangian $L$ if it is an
extremal of $A = \int L(\gamma(t), \dot{\gamma}(t)) \ud t$. The
previous integral is called an \emph{action
  integral}. \emph{Hamilton's principle of stationary action} states
that the motion in the Lagrangian system satisfies the condition that
$\delta A = 0$, where $\delta$ denotes the variation, for \emph{all}
variations of $A$ induced by variations of the path $\gamma$ that keep
endpoints fixed. The variation is defined as
$\delta A := \der{}{s} \left. A( \tilde \gamma(t,s) ) \right|_{s=0}$
where $\tilde \gamma : [-1,1]^2 \to M$ is a smooth family of curves (a
variation of $\gamma$) on the manifold such that
$\tilde\gamma(t,0) = \gamma(t)$. The stationary conditions give rise
to what is known as \emph{Lagrange's} equations. A \emph{natural
  Lagrangian} has the special form $L = T - U$ where
$T : TM \to \R^{+}$ is the \emph{kinetic energy} and $U : M \to \R$ is
the \emph{potential energy}. The kinetic energy is defined as
$T(v) = \frac 1 2 \ip{v}{v}{} $ where $\ip{\cdot}{\cdot}{}$ is the
inner product from the Riemannian structure. In the case that one has
a particle system in $\R^3$, i.e., a collection of particles with
masses $m_i$, in a natural Lagrangian system, one can show that
Hamilton's principle of stationary action is equivalent to Newton's
law of motion, i.e., that $\der{}{t} (m_i \dot r_i) = -\pder{U}{r_i} $
where $r_i$ is the trajectory of the $i^{\text{th}}$ particle, and
$\dot{r}_i$ is the velocity. This states that mass times acceleration
is the force, which is given by minus the derivative of the potential
in a conservative system. Thus, Hamilton's principle is more general
and allows us to more easily derive equations of motion for more
general systems, in particular those on manifolds.

In this paper, we will consider \emph{Lagrangian non-autonomous
  systems} where the Lagrangian is also an explicit function of time
$t$, i.e., $L : TM \times \R \to \R$. In particular, the kinetic and
potential energies can both be explicit functions of time:
$T : TM\times\R \to \R$ and $U : M \times \R \to \R$. Autonomous
systems have an \emph{energy conservation property} and do not
converge; for instance, one can think of a moving pendulum with no
friction, which oscillates forever. Since the objective in this paper
is to minimize an objective functional, we want the system to
eventually converge and Lagrangian non-autonomous systems allow for
this possibility. For completness, we present some basic facts of the
Hamiltonian perspective to elaborate on the previous point, although
we do not use this in the present paper. The generalization of total
energy is the \emph{Hamiltonian}, defined as the Legendre transform of
the Lagrangian: $H(p,q,t) = \ip{p}{\dot{q}}{} - L(q,\dot{q}, t)$ where
$p = \der{L}{\dot{q}}$ is the fiber derivative of $L$ with respect to
$\dot{q}$, i.e.,
$\der{L}{\dot{q}} \cdot w = \der{}{\varepsilon} \left. L(q, \dot q +
  \varepsilon w ) \right|_{\varepsilon = 0}$. From the Hamiltonian,
one can also obtain a system of equations describing motion on the
manifold. It can be shown that if $L=T-U$ then $H=T+U$ and more
generally, $\der{H}{t} = -\pder{L}{t}$ along the stationary path of
the action. Thus, if the Lagrangian is natural and autonomous, the
total energy is preserved, otherwise energy could be dissipated based
on the partial of the Lagrangian with respect to $t$.

\subsection{Variational Approach to Accelerated Optimization in Finite
  Dimensional Vector Spaces}
\label{sec:variational_accelerated}

Accelerated gradient optimization can be motivated by the desire to
make an ordinary gradient descent algorithm 1) more robust to noise
and local minimizers, and 2) speed-up the convergence while only using
first order (gradient) information. For instance, if one computes a
noisy gradient due imperfections in obtaining an accurate gradient, a
simple heuristic to make the algorithm more robust is to compute a
running average of the gradient over iterations, and use that as the
search direction. This also has the advantage, for instance in
speeding up optimization in narrow shallow valleys. Gradient descent
(with finite step sizes) would bounce back and forth across the valley
and slowly descend down, but averaging the gradient could cancel the
component across the valley and more quickly optimize the
function. Strategic dynamically changing weights on previous gradients can
boost the descent rate. Nesterov put forth the following famous scheme
\cite{Nesterov83} which attains an optimal rate of order $\frac{1}{t^{2}}$
in the case of a smooth, convex cost function $f(x)$:
\[
y_{k+1}=x_{k}-\frac{1}{\beta}\nabla f(x_{k}),\qquad x_{k+1}=(1-\gamma_{k})y_{k+1}+\gamma_{k}y_{k},\qquad\gamma_{k}=\frac{1-\lambda_{k}}{\lambda_{k}+1},\qquad\lambda_{k}=\frac{1+\sqrt{1+4\lambda_{k-1}^{2}}}{2}
\]
where $x_{k}$ is the $k$-th iterate of the algorithm, $y_{k}$ is an
intermediate sequence, and $\gamma_{k}$ are dynamically updated
weights.

Recently \cite{wibisono2016variational} presented a variational
generalization of Nesterov's \cite{Nesterov83} and other accelerated
gradient descent schemes in $\mathbb{R}^{n}$ based on the Bregman
divergence of a convex distance generating function $h$:
\begin{equation}
  d(y,x)=h(y)-h(x)- \nabla h(x)\cdot (y-x)  \label{eq:breg-divergence}
\end{equation}
and careful discretization of the Euler-Lagrange equations for the
time integral of the following Bregman Lagrangian
\[
  L(X,V ,t)=e^{a(t)+\gamma(t)}\left[d(X+e^{-a(t)}V,X)-e^{b(t)} U(X)\right]
\]
where the potential energy $U$ represents the cost to be minimized.
In the Euclidean case where $h(x)=\frac 1 2 |x|^2$ gives
$d(y,x)=\frac{1}{2}|y-x|^{2}$, this simplifies to
\[
  L=e^{\gamma(t)}\left[ e^{-a(t)} \frac{1}{2}|V|^{2}-e^{a(t)+b(t)}U(X)\right]
\]
where $T=\frac 1 2 |V|^2$ is the kinetic energy of a unit mass particle
in $\mathbb{R}^{n}$. Nesterov's methods \cite{Nesterov83,Nesterov14,Nesterov13,Nesterov08,Nesterov06,Nesterov05}
belong to a subfamily of Bregman Lagrangians with the following choice
of parameters (indexed by $k>0$) 
\[
a=\log k-\log t,\qquad b=k\,\log t+\log\lambda,\qquad\gamma=k\,\log t
\]
which, in the Euclidean case, yields a non-autonomous Lagrangian as follows:
\begin{equation}
  L=\frac{t^{k+1}}{k}\left( T-\lambda k^{2}t^{k-2} U \right)\label{eq:time-action}
\end{equation}
In the case of $k=2$, for example, the stationary conditions of the
integral of this time-explicit action yield the continuum limit of
Nesterov's accelerated mirror descent \cite{Nesterov05} derived in
both \cite{Su14,Krichene15}.

Since the Bregman Lagrangian assumes that the underlying manifold is a
subset of $\R^n$ (in order to define the Bregman distance\footnote{One
  could in fact generalize such operations as addition and subtraction
  in manifolds, using the exponential and logarithmic maps. We avoid
  this since in the types of manifolds that we deal with, computing
  such maps itself requires solving a PDE or another optimization
  problem. We avoid all these complications, by going back to the
  formalism in classical mechanics.}), which many manifolds do not
have - for instance the manifold of diffeomorphisms that we consider
in this paper, we instead use the original classical mechanics
formulation, which already provides a formalism for considering
general metrics though the Riemannian distance, although not
equivalent to the Bregman distance.

\section{Accelerated Optimization for Diffeomorphisms}

In this section, we use the mechanics of particles on manifolds
developed in the previous section, and apply it to the case of the
infinite-dimensional manifold of diffeomorphisms in $\R^n$ for general
$n$. This allows us to generalize accelerated optimization to infinite
dimensional manifolds. Diffeomorphisms are smooth mappings
$\phi : \R^n \to \R^n$ whose inverse exists and is also smooth.
Diffeomorphisms form a group under composition. The inverse operator
on the group is defined as the inverse of the function, i.e.,
$\phi^{-1}(\phi(x)) = x$. Here smoothness will mean that two
derivatives of the mapping exist. The group of diffeomorphisms will be
denoted $\mbox{Diff}(\R^n)$.  Diffeomorphisms relate to image
registration and optical flow, where the mappings between two images
are often modeled as diffeomorphisms\footnote{In medical imaging, the
  model of diffeomorphisms for registration is fairly accurate since
  typically full 3D scans are available and thus all points in one
  image correspond to the other image and vice versa. Of course there
  are situations (such as growth of tumors) where the diffeomorphic
  assumption is invalid. In vision, typically images have occlusion
  phenomena and multiple objects moving in different ways. So a
  diffeomorphism is not a valid assumption, it is however a good model
  when restricted to a single object in the un-occluded
  part.}. Recovering diffeomorphisms from two images will be
formulated as an optimization problem $U(\phi)$ where $U$ will
correspond to the potential energy. Note we avoid calling $U$ the
energy as is customary in computer vision literature, because for us
the energy will refer to the total mechanical energy (i.e., the sum of
the kinetic and potential energies).  We do not make any assumptions
on the particular form of the potential in this section, as our goal
is to be able to accelerate \emph{any} optimization problem for
diffeomorphisms, given that one can compute a gradient of the
potential. The formulation here allows any of the numerous cost
functionals developed over the past three decades for image
registration to be accelerated.

In the first sub-section, we give the formulation and evolution
equations for the case of acceleration without energy dissipation
(Hamiltonian is conserved), since most of the calculations are
relevant for the case of energy dissipation, which is needed for the
evolution to converge to a diffeomorphism. In the second sub-section,
we formulate and compute the evolution equations for the energy
dissipation case, which generalizes Nesterov's method to the infinite
dimensional manifold of diffeomorphisms. Finally, in the last
sub-section we give an example potential and its gradient calculation
for a standard image registration or optical flow problem.

\subsection{Acceleration Without Energy Dissipation}

\subsubsection{Formulation of the Action Integral}
Since the potential energy $U$ is assumed given, in order to formulate
the action integral in the non-dissipative case, we need to define
kinetic energy $T$ on the space of diffeomorphisms. Since
diffeomorphisms form a manifold, we can apply the the results in the
previous section and note that the kinetic energy will be defined on
the tangent space to $\mbox{Diff}(\R^n)$ at a particular
diffeomorphism $\phi$. This will be denoted
$T_{\phi}\mbox{Diff}(\R^n)$. The tangent space at $\phi$ can be
roughly thought of as the set of local perturbations $v$ of $\phi$
given for all $\varepsilon$ small that perserve the diffeomorphism
property, i.e., $\phi + \varepsilon v$ is a diffeomorphism. One can
show that the tangent space is given by
\begin{equation}
  T_{\phi} \mbox{Diff}(\R^n) = \{  v : \phi(\R^n) \to \R^n \,:\, v \mbox{ is
  smooth } \}.
\end{equation}
In the above, since $\phi$ is a diffeomorphism, we have that
$\phi(\R^n) = \R^n$. However, we write $v : \phi(\R^n) \to \R^n$ to
emphasize that the velocity fields in the tangent space are defined on
the range of $\phi$, so that $v$ is interpreted as a Eulerian
velocity. By definition of the tangent space, an infinitesimal
perturbation of $\phi$ by a tangent vector, given by
$\phi + \varepsilon v$, will be a diffeomorphism for $\varepsilon$
sufficiently small. Note that the previous operation of addition is
defined as follows:
\[
  ( \phi + \varepsilon v )(x) = \phi(x) + \varepsilon v(\phi(x)).
\]
The tangent space is a set of smooth vector fields on $\phi(\R^n)$ in
which the vector field at each point $\phi(x)$, displaces $\phi(x)$
infinitesimally by $v(\phi(x))$ to form another diffeomorphism.

We note a classical result from \cite{ebin1970groups}, which will be
of utmost importance in our derivation of accelerated optimization on
$\diff$. The result is that any (orientable) diffeomorphism may be
generated by integrating a time-varying smooth vector field over time,
i.e.,
\begin{equation} \label{eq:phi_evol}
  \partial_t \phi_t(x) = v_t( \phi_t(x) ), \quad x\in \R^n,
\end{equation}
where $\partial_t$ denotes partial derivative with respect to $t$,
$\phi_t$ denotes a time varying family of diffeomorphisms evaluated at
the time $t$, and $v_t$ is a time varying collection of vector fields
evaluated at time $t$. The path $t \to \phi_t(x)$ for a fixed $x$
represents a trajectory of a particle starting at $x$ and flowing
according to the velocity field.

The space on which the kinetic energy is defined is now clear, but one
more ingredient is needed before we can define the kinetic energy. Any
accelerated method will need a notion of \emph{mass}, otherwise
acceleration is not possible, e.g., a mass-less ball will not
accelerate. We generalize the concept of mass to the infinite
dimensional manifold of diffeomorphisms, where there are infinitely
more possibilities than a single particle in the finite dimensional
case considered by \cite{wibisono2016variational}. There optimization
is done on a finite dimensional space, the space of a \emph{single}
particle, and the possible choices of mass are just different fixed
constants. The choice of the constant, given the particle's mass
remains fixed, is irrelevant to the final evolution. This is different
in than the case of diffeomorphisms. Here we imagine that an infinite
number of particles densely distributed in $\R^n$ with mass exist and
are displaced by the velocity field $v$ at every point. Since the
particles are densely distributed, it is natural to represent the mass
of all particles with a \emph{mass density} $\rho : \R^n \to \R$,
similar to a fluid at a fixed time instant. The density $\rho$ is
defined as mass divided by volume as the volume shrinks.  During the
evolution to optimize the potential $U$, the particles are displaced
continuously and thus the density of these particles will in general
change over time. Note the density will change even if the density at
the start is constant except in the case of full translation motion
(when $v$ is spatially constant). The latter case is not general
enough, as we want to capture general diffeomorphisms. We will assume
that the system of particles in $\R^n$ is closed and so we impose a
\emph{mass preservation constraint}, i.e.,
\begin{equation} \label{eq:mass_conserve}
  \int_{\R^n} \rho(x) \ud x = 1,
\end{equation}
where we assume the total mass is one without loss of generality. Note
that the evolution of a time varying density $\rho_t$ as it is
deformed in time by a time varying velocity is given by the
\emph{continuity equation}, which is a local form of the conservation
of mass given by \eqref{eq:mass_conserve}. The continuity equation is
defined by the partial differential equation
\begin{equation} \label{eq:continuity_eqn}
  \partial_t \rho(x) + \dv{\rho(x) v(x)} = 0, \quad x\in \R^n
\end{equation}
where $\dv{}$ denotes the divergence operator acting on a vector
field and is $\dv{F} = \sum_{i=}^n \partial_{x_i} F^i$ where
$\partial_{x_i}$ is the partial with respect to the $i^{\text{th}}$
coordinate and $F^i$ is the $i^{\text{th}}$ component of the vector
field. We will assume that the mass distribution dies down to zero
outside a compact set so as to avoid boundary considerations in our derivations.

We now have the two ingredients, namely the tangent vectors to $\diff$
and the concept of mass, which allows us to define a natural physical
extension of the kinetic energy to the case of an infinite mass
distribution. We present one possible kinetic energy to illustrate the
idea of accelerated optimization, but this is by no means the only
definition of kinetic energy. We envision this to be part of the
design process in which one could get a multitude of various different
accelerated optimization schemes by defining different kinetic
energies. Our definition of kinetic energy is just the kinetic energy
arising from fluid mechanics:
\begin{equation}
  T(v) = \int_{ \phi(\R^n) } \frac 1 2 \rho(x) |v(x)|^2 \ud x,
\end{equation}
which is just the integration of single particle's kinetic energy
$\frac 1 2 m |v|^2$ and matches the definition of the kinetic energy
of a sum of particles in elementary physics. Note that the kinetic
energy is just one-half times the norm squared for the norm arising
from the Riemannian metric \cite{arnol2013mathematical}, i.e., an
inner product on the tangent space of $\diff$. The Riemannian metric
is given by
$\ip{v_1}{v_2}{} = \int_{\R^n} \rho(x) v_1(x) \cdot v_2(x) \ud x$,
which is just a weighted $\mathbb{L}^2$ inner product.

We are now ready to define the action integral for the case of
$\diff$, which is defined on \emph{paths} of diffeomorphisms. A path
of diffeomorphisms is $\phi : [0,\infty) \times \R^n \to \R^n$ and we
will denote the diffeomorphism at time $t$ along this path as
$\phi_t$.  Since diffeomorphisms are generated by velocity fields, we
may equivalently define the action in terms of \emph{paths} of
velocity fields. A path of velocity fields is given by
$v : [0,\infty) \times \R^n \to \R^n$, and the velocity at time $t$
along the path is denoted $v_t$. Notice that the action requires a
kinetic energy and the kinetic energy is dependent on the mass
density. Thus, a path of densities
$\rho : [0,\infty) \times \R^n \to \R^+$ is required, which represents
the mass distribution of the particles in $\R^n$ as they are deformed
along time by the velocity field $v_t$. This path of densities is
subject to the continuity equation. With this, the action integral
is then 
\begin{equation} \label{eq:action}
  A = \int \left[ T(v_t) - U(\phi_t) \right] \ud t,
\end{equation}
where the integral is over time, and we do not specify the limits of
integration as it is irrelevant as the endpoints will be fixed and the
action will be thus independent of the limits. Note that the action is
implicitly a function of three paths, i.e., $v_t,\phi_t$ and
$\rho_t$. Further, these paths are not independent of each other as
$\phi_t$ depends on $v_t$ through the generator relation
\eqref{eq:phi_evol}, and $\rho_t$ depends on $v_t$ through the
continuity equation \eqref{eq:continuity_eqn}.

\subsubsection{Stationary Conditions for the Action}
We now derive the stationary conditions for the action integral
\eqref{eq:action}, and thus the evolution equation for a path of
diffeomorphisms, which is Hamilton's principle of stationary action,
equivalent to a generalization of Newton's laws of motion extended to
diffeomorphisms. As discussed earlier, we would like to find the
stationary conditions for the action integral \eqref{eq:action},
defined on the path $\phi_t$, under the conditions that it is
generated by a path of smooth velocity fields $v_t$, which is also
coupled with the mass density $\rho_t$.

We treat the computation of the stationary conditions of the action as
a constrained optimization problem with respect to the two
aforementioned constraints. To do this, it is easier to formulate the
action in terms of the path of the inverse diffeomorphisms
$\phi^{-1}_t$, which we will call $\psi_t$. This is because the
non-linear PDE constraint \eqref{eq:phi_evol} can be equivalently
reformulated as the following linear transport PDE in the inverse
mappings:
\begin{equation} \label{eq:transport_backward}
  \partial_t \psi_t(x) + [D\psi_t(x)]v_t(x) = 0, \quad x\in \R^n
\end{equation}
where $D$ denotes the derivative (Jacobian) operator. To derive the
stationary conditions with respect to the constraints, we use the
method of Lagrange multipliers. We denote by
$\lambda : [0,\infty) \times \R^n \to \R^n$ the Lagrange multiplier
according to \eqref{eq:transport_backward}. We denote
$\mu : [0,\infty) \times \R^n \to \R$ as the Lagrange multiplier for
the continuity equation \eqref{eq:continuity_eqn}. Because we would
like to be able to have possibly discontinuous solutions of the
continuity equation, we formulate it in its weak form by multiplying
the constraint by the Lagrange multiplier and integrating by parts
thereby removing the derivatives on possibly discontinuous $\rho$:
\begin{equation}
  \int \int_{\R^n} \mu \left[ \partial_t \rho + \dv{ pv } \right] \ud
  x \ud t = 
  -\int \int_{\R^n} \left[ \partial_t \mu + \nabla \mu \cdot
    v \right] \rho \ud x \ud t,
\end{equation}
where $\nabla$ denotes the spatial gradient operator. Notice that we
ignore the boundary terms from integration by parts as we will
eventually compute stationary conditions, and we are assuming fixed
initial conditions for $\rho_0$ and we assume that $\rho_{\infty}$
converges and thus cannot be perturbed when computing the variation of
the action integral. With this, we can formulate the action integral
with Lagrange multipliers as
\begin{align}
  \label{eq:action_lagrange}
  A &= \int \left[ T(v) - U(\phi) \right] \ud t 
      + \int \int_{\R^n} \lambda^T[ \partial_t \psi + (D\psi)v ] \ud
      x \ud t
      -\int \int_{\R^n} \left[ \partial_t \mu + \nabla \mu \cdot
      v \right] \rho \ud x \ud t,
\end{align}
where we have omitted the subscripts to avoid cluttering the
notation. Notice that the potential $U$ is now a function of $\psi$,
and the action depends now on $\rho, \psi, v$ and the Lagrange
multipliers $\mu, \lambda$.

We now compute variations of $A$ as we perturb the paths by variations
$\delta \rho$, $\delta v$ and $\delta \phi$ along the paths. The variation with
respect to $\rho$ is defined as $\delta A \cdot \delta \rho =
\left. \der{}{\varepsilon} A(\rho + \varepsilon \delta \rho, v, \psi)
\right|_{\varepsilon =0 }$, and the other variations are defined in a
similar fashion. By computing these variations, we get the following
stationary equations:

\begin{theorem} \label{thrm:stationary_lagrange_mult}
  The stationary conditions of the path for the action \eqref{eq:action_lagrange}
  are
  \begin{align}
    \partial_t \lambda + (D\lambda )v + \lambda \dv{v}  &= (\nabla\psi)^{-1}\nabla
                                                          U(\phi)\\
    \rho v + (\nabla \psi) \lambda - \rho\nabla\mu &= 0 \\
    \partial_t \mu + \nabla\mu \cdot v &= \frac 1 2 |v|^2
  \end{align}
  where $\nabla U(\phi)\in T_{\phi}\diff$ denotes the functional
  gradient of $U$ with respect to $\phi$ (see
  Appendix~\ref{app:funct_grads}), and $\nabla \mu, \nabla \psi$ are
  spatial gradients. The original constraints
  \eqref{eq:transport_backward} on the mapping and the continuity
  equation \eqref{eq:continuity_eqn} are part of the stationary
  conditions.
\end{theorem}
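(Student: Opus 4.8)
The plan is to treat the augmented action \eqref{eq:action_lagrange} as an \emph{unconstrained} functional of the five independent fields $\rho,\psi,v,\mu,\lambda$ and to set each of the five first variations to zero; the original constraints reappear automatically as the variations in the multipliers, which is the content of the last sentence of the statement. Because the problem has been written in Lagrange-multiplier form, when varying one field I hold the other four fixed, so I never need to propagate the coupling \eqref{eq:phi_evol}, \eqref{eq:continuity_eqn} through the computation. Concretely, $\delta A\cdot\delta\lambda=0$ returns the transport constraint \eqref{eq:transport_backward} and $\delta A\cdot\delta\mu=0$ returns the weak continuity equation \eqref{eq:continuity_eqn}.

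The variations in $\rho$ and $v$ are purely algebraic. Collecting the $\rho$-dependent terms (from $T$ and from the $\mu$-term) gives $\int\int_{\R^n}\left[\tfrac12|v|^2-\partial_t\mu-\nabla\mu\cdot v\right]\delta\rho\ud x\ud t=0$ for all $\delta\rho$, hence the third equation $\partial_t\mu+\nabla\mu\cdot v=\tfrac12|v|^2$. Collecting the $v$-dependent terms from $T(v)$, from $\lambda^{T}(D\psi)v$, and from $\nabla\mu\cdot v$, and using the adjoint identity $\lambda^{T}(D\psi)\delta v=[(D\psi)^{T}\lambda]\cdot\delta v=[(\nabla\psi)\lambda]\cdot\delta v$, gives the second equation $\rho v+(\nabla\psi)\lambda-\rho\nabla\mu=0$.

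The substantive step is the variation in $\psi$, which produces the first equation, since $\psi$ enters both through the constraint term $\lambda^{T}[\partial_t\psi+(D\psi)v]$ and through the potential $U(\phi)=U(\psi^{-1})$. For the constraint term I integrate by parts once in time and once in space: the time integration by parts sends $\lambda^{T}\partial_t\delta\psi$ to $-(\partial_t\lambda)^{T}\delta\psi$ (the endpoint contributions drop because $\psi$ is held fixed at the temporal endpoints), and the spatial integration by parts sends $\lambda^{T}(D\delta\psi)v$ to $-[(D\lambda)v+\lambda\,\dv{v}]^{T}\delta\psi$, the extra divergence term coming from the product rule $\partial_{x_j}(\lambda^{i}v^{j})$. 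The remaining task is to differentiate $U(\psi^{-1})$ with respect to $\psi$, and this is the step I expect to be the main obstacle, because it is where the factor $(\nabla\psi)^{-1}$ on the right-hand side is generated.

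The clean way to handle the potential term is to differentiate the identity $\psi_\varepsilon(\phi_\varepsilon(x))=x$ in $\varepsilon$, giving the additive perturbation $\delta\phi(x)=-(D\psi(\phi(x)))^{-1}\delta\psi(\phi(x))$. Evaluating the induced Eulerian tangent vector $w\in T_{\phi}\diff$ (defined, per the earlier addition rule, by $\partial_\varepsilon\phi_\varepsilon(x)|_{0}=w(\phi(x))$) and using $\phi\circ\psi=\mathrm{id}$ collapses the composition to the pointwise relation $w(y)=-(D\psi(y))^{-1}\delta\psi(y)$, with \emph{no} leftover Jacobian-determinant factor; this cancellation is the crux of the whole computation. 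Pairing this with the definition of the functional gradient $\ip{\nabla U(\phi)}{w}{}=\der{}{\varepsilon}U(\phi_\varepsilon)|_{0}$ from Appendix~\ref{app:funct_grads} yields $-\int\int_{\R^n}[(\nabla\psi)^{-1}\nabla U(\phi)]\cdot\delta\psi\ud x\ud t$ for the variation of $-U$. Assembling the three contributions and requiring the coefficient of the arbitrary $\delta\psi$ to vanish gives $\partial_t\lambda+(D\lambda)v+\lambda\,\dv{v}=(\nabla\psi)^{-1}\nabla U(\phi)$. The care needed is entirely in this inverse-map chain rule: getting the transpose on $(D\psi)^{-1}$ correct and verifying that the change of frame between $\psi$ and $\phi$ leaves no spurious determinant.
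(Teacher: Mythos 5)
Your proposal is correct and follows essentially the same route as the paper's own proof (Appendix~\ref{app:stat_cond_nondissip}): independent variations of the multiplier-augmented action, with your $\rho$- and $v$-variations matching the paper's lemmas verbatim, and your $\psi$-variation equivalent to the paper's mapping lemma via the same chain rule from $\psi\circ\phi=\mathrm{id}$ --- the paper merely substitutes $\widehat{\delta\psi}=-(D\psi)\,\delta\phi$ and reads off the coefficient of $\delta\phi$, which is your stationarity condition multiplied by the invertible matrix $-(\nabla\psi)$, so the two bookkeepings yield the identical equation $\partial_t\lambda+(D\lambda)v+\lambda\,\dv{v}=(\nabla\psi)^{-1}\nabla U(\phi)$. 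One local sign slip to fix: since $\delta U\cdot\delta\psi=-\int\!\!\int_{\R^n}\left[(\nabla\psi)^{-1}\nabla U(\phi)\right]\cdot\delta\psi\,\ud x\,\ud t$, it is the variation of $U$ (not of $-U$, as your intermediate sentence states) that carries the minus sign, so the $-U$ term in the action contributes with a plus sign --- consistent with your correctly stated final equation.
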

\begin{proof}
  See Appendix~\ref{app:stat_cond_nondissip}.
\end{proof}

While the previous theorem does give the stationary conditions and
evolution of the Lagrange multipliers, in order to define a forward
evolution method where the initial conditions for the density, mapping
and velocity are given, we would need initial conditions for the
Lagrange multipliers, which are not known from the calculation leading
to Theorem~\ref{thrm:stationary_lagrange_mult}. Therefore, we will now
eliminate the Lagrange multipliers and rewrite the evolution equations
in terms of forward equations for the velocity, mapping and
density. This leads to the following theorem:

\begin{theorem}[Evolution Equations for the Path of Least Action]
  \label{thrm:evol_final_non_dissip}
  The stationary conditions for the path of the action integral
  \eqref{eq:action} subject to the constraints \eqref{eq:phi_evol} on
  the mapping and the continuity equation \eqref{eq:continuity_eqn}
  are given by the forward evolution equation
  \begin{equation} \label{eq:evol_velocity}
    \partial_t v = -(Dv)v -\frac{1}{\rho} \nabla U(\phi),
  \end{equation}
  which describes the evolution of the velocity. The forward evolution
  equation for the diffeomorphism is given by \eqref{eq:phi_evol},
  that of its inverse mapping is given by
  \eqref{eq:transport_backward}, and the forward evolution of its
  density is given by \eqref{eq:continuity_eqn}.
\end{theorem}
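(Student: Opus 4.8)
The plan is to take the three stationary conditions of Theorem~\ref{thrm:stationary_lagrange_mult} together with the two constraints \eqref{eq:transport_backward} and \eqref{eq:continuity_eqn}, and eliminate the Lagrange multipliers $\lambda$ and $\mu$ so as to obtain a closed forward equation in $v,\rho,\psi$ alone. The organizing observation is that the algebraic stationary condition $\rho v + (\nabla\psi)\lambda - \rho\nabla\mu = 0$ is a momentum relation: it expresses the physical momentum $\rho v$ in terms of the multipliers, which play the role of Clebsch-type velocity potentials, while the other two stationary conditions are evolution equations for those potentials. Accordingly, I would differentiate the momentum relation in time and substitute the evolutions of $\lambda$, $\mu$, $\rho$ and $\psi$, producing an evolution equation for $\rho v$ in which the multipliers no longer appear.

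Concretely, I would first solve the algebraic relation for $(\nabla\psi)\lambda = \rho(\nabla\mu - v)$, and record the time derivatives supplied by the remaining equations: $\partial_t\mu = \tfrac12|v|^2 - \nabla\mu\cdot v$ from the third stationary condition, $\partial_t\lambda = (\nabla\psi)^{-1}\nabla U(\phi) - (D\lambda)v - \lambda\,\dv{v}$ from the first, $\partial_t\rho = -\dv{\rho v}$ from continuity, and $\partial_t\nabla\psi = \nabla(\partial_t\psi) = -\nabla((D\psi)v)$ from the transport constraint. Applying $\partial_t$ to $\rho v = \rho\nabla\mu - (\nabla\psi)\lambda$ and inserting these, the forcing $(\nabla\psi)^{-1}\nabla U(\phi)$ is premultiplied by $(\nabla\psi)$ and collapses to $\nabla U(\phi)$, while the terms carrying $\mu$ and $\lambda$ should cancel against one another once the identities $\nabla(\tfrac12|v|^2) = (Dv)^T v$ and the product rule for the gradient of a dot product are applied. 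What should remain is the Euler-type balance $\rho(\partial_t v + (Dv)v) = -\nabla U(\phi)$, and dividing by $\rho$ yields \eqref{eq:evol_velocity}; the constraints \eqref{eq:phi_evol}, \eqref{eq:transport_backward} and \eqref{eq:continuity_eqn} are reproduced directly as the other stationary conditions.

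The main obstacle is the bookkeeping for the Jacobian terms $\nabla\psi$ and the reconciliation of the convective term. The natural first manipulations produce $(Dv)^T v$ from differentiating $\mu$, whereas \eqref{eq:evol_velocity} requires the advective $(Dv)v$; the two differ by an antisymmetric (vorticity) contribution that must be supplied exactly by the $(\partial_t\nabla\psi)\lambda$ and $(\nabla\psi)(D\lambda)v$ terms. Verifying that these Jacobian-weighted multiplier terms assemble precisely into that correction, re-expressing them via $(\nabla\psi)\lambda = \rho(\nabla\mu - v)$ and using continuity to absorb the $\partial_t\rho$ and $\dv{v}$ contributions, is where the real work lies. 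I expect this is cleanest if organized as the standard Clebsch-potential elimination, so the identity is recognized as the emergence of the Euler equation from its Clebsch representation rather than as a long uncontrolled expansion.
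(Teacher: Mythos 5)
Your proposal is correct and follows essentially the same route as the paper: Appendix~\ref{app:velocity_evol} performs precisely the elimination you describe, with Lemma~\ref{lem:lambda_t_w_t} converting the $\lambda$-evolution into one for $w=(\nabla\psi)\lambda$ (your Jacobian bookkeeping, where the $D^2\psi$ terms cancel so the forcing collapses to $\nabla U(\phi)$) and Lemma~\ref{lem:w_t_v_t} substituting $w=\rho(\nabla\mu - v)$, using continuity to absorb the $\partial_t\rho$ and $\dv{v}$ contributions and $\nabla(\tfrac12|v|^2)=(\nabla v)v$ to reconcile $(Dv)^{T}v$ with the advective $(Dv)v$, exactly the cancellations you anticipate. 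The only difference is presentational: the paper does not invoke the Clebsch-potential interpretation, but the computation is the same.
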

\begin{proof}
  See Appendix~\ref{app:velocity_evol}.
\end{proof}

\begin{remark}[Relation to Euler's Equations]
  The left hand side of the equation (along with the continuity
  equation) is the left hand side of the \emph{compressible Euler
    Equation} \cite{marsden2013introduction}, which describes the
  motion of a perfect fluid (i.e., assuming no heat transfer or
  viscous effects). The difference is that the right hand side in
  \eqref{eq:evol_velocity} is the gradient of the potential, which we
  seek to optimize, that depends on the diffeomorphism that is the
  integral of the velocity over time, rather than the gradient of
  pressure that is purely a function of density in the Euler
  equations.
\end{remark}

With this theorem, it is now possible to numerically compute the
stationary path of the action, by starting with initial conditions on
the density, mapping and velocity. The velocity is
updated by \eqref{eq:evol_velocity}, the mapping is then updated by
\eqref{eq:phi_evol}, and the density is updated by
\eqref{eq:continuity_eqn}. Note that the density at each time impacts
the velocity as seen in \eqref{eq:evol_velocity}. These equations are
a set of coupled partial differential equations. They describe the
path of stationary action when the action integral does not arise from a
system that has dissipative forces. Notice  the
velocity evolution is a natural analogue of Newton's
equations. Indeed, if we consider the material derivative, which
describes the time rate of change of a quantity subjected to a time
dependent velocity field, then one can write the velocity evolution
\eqref{eq:evol_velocity} as follows.
\begin{theorem}[Equivalence of Critical Paths of Action to Newton's 2nd
  Law]
  The velocity evolution \eqref{eq:evol_velocity} derived as the
  critical path of the action integral \eqref{eq:action} is 
  \begin{equation} \label{eq:newton_law}
    \rho\frac{Dv}{Dt} = -\nabla U(\phi),
  \end{equation}
  where $\frac{Df}{Dt} := \partial_t f + (Df)v$ is the material
  derivative.
\end{theorem}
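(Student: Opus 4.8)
The plan is to observe that this theorem is a purely algebraic restatement of the velocity evolution equation \eqref{eq:evol_velocity} already established in Theorem~\ref{thrm:evol_final_non_dissip}, obtained by recognizing the convective term as part of the material derivative. First I would recall the definition supplied in the statement, namely that for any time-dependent field $f$ carried by the velocity $v$, the material derivative is $\frac{Df}{Dt} := \partial_t f + (Df)v$. Applied to the velocity field $v$ itself, this reads $\frac{Dv}{Dt} = \partial_t v + (Dv)v$, which simply collects the local (Eulerian) rate of change $\partial_t v$ together with the convective/advective term $(Dv)v$ describing how $v$ changes because the material parcel carrying it is transported along the flow.

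With this in hand, the derivation is immediate. Starting from \eqref{eq:evol_velocity}, I would move the convective term to the left-hand side to obtain $\partial_t v + (Dv)v = -\frac{1}{\rho}\nabla U(\phi)$, and then identify the left-hand side as $\frac{Dv}{Dt}$ by the definition above. Multiplying both sides by the density $\rho$ (which is strictly positive on the support of the mass distribution, as guaranteed by the mass-preservation constraint \eqref{eq:mass_conserve}) then yields $\rho\frac{Dv}{Dt} = -\nabla U(\phi)$, which is exactly \eqref{eq:newton_law}. No additional regularity or boundary hypotheses beyond those already imposed in Theorem~\ref{thrm:evol_final_non_dissip} are required.

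There is no genuine obstacle here: the content of the result lies entirely in the interpretation rather than in any nontrivial computation. The only point meriting a sentence of commentary is the identification $\rho\frac{Dv}{Dt} = (\text{mass per unit volume})\times(\text{acceleration of the material parcel})$, so that \eqref{eq:newton_law} is precisely the statement that mass density times acceleration equals the force $-\nabla U(\phi)$, i.e.\ Newton's second law for an infinitesimal mass element of the flow, with the conservative force arising from the potential $U$ to be optimized. This closes the loop back to the discussion of natural Lagrangian systems, confirming that Hamilton's principle of stationary action for the action integral \eqref{eq:action} reproduces Newton's law in the infinite-dimensional diffeomorphism setting, now with the spatially varying inertia supplied by the evolving density $\rho$.
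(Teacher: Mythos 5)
Your proposal is correct and matches the paper's own argument, which simply notes the result is a consequence of the definition of the material derivative: you rearrange \eqref{eq:evol_velocity}, identify $\partial_t v + (Dv)v$ as $\frac{Dv}{Dt}$, and multiply by $\rho$. Your added remark on positivity of $\rho$ and the Newtonian interpretation is consistent with the paper's surrounding discussion and introduces nothing beyond it.
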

\begin{proof}
  This is consequence of the definition of material derivative.
\end{proof}
The material derivative is obtained by taking the time derivative of
$f$ along the path $t\to \phi(t,x)$, i.e., $\der{}{t}
f(t,\phi(t,x))$. Therefore, $Dv/Dt$ is the derivative of velocity
along the path. The equation \eqref{eq:newton_law} says the time rate
of change of velocity times density is equation to minus the gradient
of the potential, which is Newton's 2nd law, i.e., the mass times
acceleration is equal to the force, which is given by the gradient of
the potential in a conservative system.

The evolution described by the equations above will not converge. This
is because the total energy is conserved, and thus the system will
oscillate over a (local) minimum of the potential $U$, forever, unless
the initialization is at a stationary point of the potential $U$. In
practice, due to discretization of the equations, which require
entropy preserving schemes \cite{sethian1999level}, the implementation
will dissipate energy and the evolution equations eventually converge.

\subsubsection{Viscosity Solution and Regularity}
An important question is whether the evolution equations given by
Theorem~\ref{thrm:evol_final_non_dissip} maintain that the mapping
$\phi_t$ remains a diffeomorphism given that one starts the evolution
with a diffeomorphism. This is of course important since all of the
derivations above were done assuming that $\phi$ is a diffeomorphism,
moreover for many applications one wants to maintain a diffeomorphic
mapping. The answer is affirmative since to define a solution of
\eqref{eq:evol_velocity}, we define the solution as the
\emph{viscosity solution} (see e.g.,
\cite{crandall1983viscosity,rouy1992viscosity,sethian1999level}). The
viscosity solution is defined as the limit of the equation
\eqref{eq:evol_velocity} with a diffusive term of the velocity added
to the right hand side, as the diffusive coefficient goes to
zero. More precisely
\begin{equation} \label{eq:evol_velocity_visc}
  \partial_t v_{\varepsilon} = -(Dv_{\varepsilon})v_{\varepsilon} +
  \varepsilon \Delta v_{\varepsilon} -\frac{1}{\rho} \nabla U(\phi),
\end{equation}
where $\Delta$ denotes the spatial Laplacian, which is a smoothing
operator. This leads to a smooth ($C^{\infty}$) solution due to the
known smoothing properties of the Laplacian. The viscosity solution is
then $v = \lim_{\varepsilon\to 0} v$. In practice, we do not actually
add in the diffusive term, but rather approximate the effects with
small $\varepsilon$ by using entropy conditions in our numerical
implementation. One may of course add the diffusive term to induce
more regularity into the velocity and thus into the mapping
$\phi$. Since the velocity is smooth ($C^{\infty}$), the integral of a
smooth vector field will result in a diffeomorphism
\cite{ebin1970groups}.

\subsubsection{Discussion}
An important property of these evolution equations, when compared to
virtually all previous image registration and optical flow methods is
the lack of need to compute inverses of differential operators, which
are global smoothing operations, and are expensive. Typically, in
optical flow (such as the classical Horn \& Schunck
\cite{horn1981determining}) or LDDMM \cite{beg2005computing} where one
computes Sobolev gradients, one needs to compute inverses of
differential operators, which are expensive. Of course one could
perform standard gradient descent, which does not typically require
computing inverses of differential operators, but gradient descent is
known not to be feasible and it is hard to numerically implement
without significant pre-processing, and easily gets stuck in what are
effectively numerical local minima. The equations in
Theorem~\ref{thrm:evol_final_non_dissip} are all local, and
experiments suggest they are not susceptible to the problems that
plague gradient descent.

\subsubsection{Constant Density Case}
We now analyze the case when the density $\rho$ is chosen to be a
fixed constant, and we derive the evolution equations. In this case,
the kinetic energy simplifies as follows 
\begin{equation} \label{eq:kinetic_const_density}
  T(v) = \frac {\rho}{ 2 }\int_{ \phi(\R^n)  } |v(x)|^2 \ud x.
\end{equation}
We can define the action integral as before \eqref{eq:action} with the
previous definition of kinetic energy, and we can derive the
stationary conditions by defining the following action integral
incorporating the mapping constraint
\eqref{eq:transport_backward}. This gives the modified action integral
as
\begin{align}
  \label{eq:action_lagrange_const_density}
  A &= \int \left[ T(v) - U(\phi) \right] \ud t 
      + \int \int_{\R^n} \lambda^T[ \partial_t \psi + (D\psi)v ] \ud
      x \ud t.
\end{align}
Note that the continuity equation is no longer imposed as a constraint
as the density is treated as a fixed constant. This leads to the
following stationary conditions.
\begin{theorem} \label{thrm:stationary_lagrange_mult_const_density}
  The stationary conditions of the path for the action
  \eqref{eq:action_lagrange_const_density} are
  \begin{align}
    \partial_t \lambda + (D\lambda )v + \lambda \dv{v}  &= (\nabla\psi)^{-1}\nabla
                                                          U(\phi)\\
    \rho v + (\nabla \psi) \lambda &= 0
  \end{align}
  where $\nabla U(\phi)\in T_{\phi}\diff$ denotes the functional
  gradient of $U$ with respect to $\phi$, and $\nabla \psi$ are
  spatial gradients. The original constraint
  \eqref{eq:transport_backward} on the mapping is part of the stationary
  conditions.
\end{theorem}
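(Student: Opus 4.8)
The plan is to follow the same method-of-Lagrange-multipliers argument used to establish Theorem~\ref{thrm:stationary_lagrange_mult}, but specialized to the constant-density setting, in which the continuity-equation constraint and its associated multiplier $\mu$ are simply absent. Concretely, I would treat \eqref{eq:action_lagrange_const_density} as a functional of the three now-independent paths $v$, $\psi$, and $\lambda$, and set each of the three variations $\delta A\cdot\delta v$, $\delta A\cdot\delta\psi$, and $\delta A\cdot\delta\lambda$ to zero. The variation with respect to the multiplier $\lambda$ immediately reproduces the transport constraint \eqref{eq:transport_backward}, so that stationary condition is free.

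For the variation with respect to $v$, I use that on $\diff$ one has $\phi(\R^n)=\R^n$, so the kinetic term \eqref{eq:kinetic_const_density} reduces to $T(v)=\frac{\rho}{2}\int_{\R^n}|v|^2\ud x$ with variation $\rho\int_{\R^n} v\cdot\delta v\ud x$. The only other $v$-dependent term is the constraint pairing $\int\int_{\R^n}\lambda^T(D\psi)v\ud x\ud t$, whose variation is $\int\int_{\R^n}[(D\psi)^T\lambda]\cdot\delta v\ud x\ud t$. Since $\delta v$ is arbitrary, this produces the pointwise algebraic relation $\rho v+(\nabla\psi)\lambda=0$, which is the second stationary condition, up to the transpose convention implicit in $\nabla\psi$.

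The substantive computation is the variation with respect to $\psi$, which yields the evolution equation for $\lambda$. Perturbing $\psi\to\psi+\varepsilon\,\delta\psi$ in the constraint term gives $\int\int_{\R^n}\lambda^T[\partial_t\delta\psi+(D\delta\psi)v]\ud x\ud t$; integrating by parts once in time and once in space (discarding boundary terms by the fixed-endpoint assumption and the compact-support hypothesis on the data) moves all derivatives off $\delta\psi$ and assembles the transport operator $\partial_t\lambda+(D\lambda)v+\lambda\dv{v}$, where the $\lambda\dv{v}$ term arises from the product rule applied to $\sum_j\partial_{x_j}(\lambda^i v^j)$. This is the left-hand side of the first stationary condition.

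The main obstacle is the variation of the potential term $-\int U(\phi)\ud t$ with respect to $\psi$, since $U$ is given as a function of $\phi$ while the independent variable is now $\psi=\phi^{-1}$. I would relate an infinitesimal perturbation of $\psi$ to one of $\phi$ by differentiating the identity $\psi(\phi(y))=y$, which gives $\delta\phi=-(D\psi)^{-1}\,\delta\psi$ along the maps, and then invoke the definition of the functional gradient $\nabla U(\phi)$ (Appendix~\ref{app:funct_grads}) together with the change of variables $x=\phi(y)$. This chain-rule-plus-Jacobian bookkeeping is precisely what generates the factor $(\nabla\psi)^{-1}$ and hence the right-hand side $(\nabla\psi)^{-1}\nabla U(\phi)$; arranging the Jacobian determinant from the change of variables and the transpose conventions to cancel correctly is the delicate point. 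Since every manipulation here coincides with that in the proof of Theorem~\ref{thrm:stationary_lagrange_mult} once the $\mu$-terms are set to zero, the claimed conditions follow by direct specialization.
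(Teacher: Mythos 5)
Your proposal is correct and follows essentially the same route as the paper, which proves Theorem~\ref{thrm:stationary_lagrange_mult_const_density} by specializing the non-constant-density computation of Appendix~\ref{app:stat_cond_nondissip}: dropping the $\mu$-terms leaves the mapping variation unchanged (yielding $\partial_t\lambda+(D\lambda)v+\lambda\dv{v}=(\nabla\psi)^{-1}\nabla U(\phi)$ via the chain-rule relation $\widehat{\delta\psi}=-(D\psi)\delta\phi$, exactly as you argue) and simply removes the $-\rho\nabla\mu$ term from the velocity condition. One cosmetic note: no Jacobian determinant actually needs to be ``arranged'' in the stationarity computation itself, since by the paper's definition the functional gradient $\nabla U(\phi)$ is paired with $\delta\phi$ directly over $\phi(\R^n)$ and any determinant factor is already absorbed into $\nabla U(\phi)$; the factor $(\nabla\psi)^{-1}$ comes purely from transposing $(D\psi)$ onto the multiplier bracket.
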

\begin{proof}
  The computation is similar to the non-constant density case
  Appendix~\ref{app:stat_cond_nondissip}. Note that stationary
  condition with respect to the mapping remains the same as the
  density constraint in the non-constant density case does not depend
  on the mapping. The stationary condition with respect to the
  velocity avoids the variation with respect to the density constraint
  in the non-constant density case, and remains the same except for
  the last term.
\end{proof}

As before, we can solve for the velocity evolution directly. This
results in the following result.
\begin{theorem}[Evolution Equations for the Path of Least Action]
  \label{thrm:evol_final_non_dissip_const_density}
  The stationary conditions for the path of the action integral
  \eqref{eq:action} with kinetic energy
  \eqref{eq:kinetic_const_density} subject to the constraint
  \eqref{eq:phi_evol} on the mapping is given by the forward evolution
  equation
  \begin{equation} \label{eq:evol_velocity_const_density}
    \partial_t v = -(Dv)v -(\nabla v)v -v\dv{v} - \frac{1}{\rho} \nabla U(\phi)
  \end{equation}
  The forward evolution
  equation for the diffeomorphism is given by \eqref{eq:phi_evol},
  and that of its inverse mapping is given by
  \eqref{eq:transport_backward}.
\end{theorem}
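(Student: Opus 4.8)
The plan is to eliminate the Lagrange multiplier $\lambda$ from the two stationary conditions of Theorem~\ref{thrm:stationary_lagrange_mult_const_density} and recast them as a single forward evolution in $v$, in direct parallel with the non-constant density derivation of Appendix~\ref{app:velocity_evol}. Since $\psi$ is a diffeomorphism, $\nabla\psi$ is invertible, so the algebraic stationary condition $\rho v + (\nabla\psi)\lambda = 0$ solves immediately for $\lambda = -\rho(\nabla\psi)^{-1}v$. The strategy is then to substitute this expression into the costate equation $\partial_t\lambda + (D\lambda)v + \lambda\dv{v} = (\nabla\psi)^{-1}\nabla U(\phi)$ and clear every dependence on $\psi$.

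Concretely, I would left-multiply the costate equation by $\nabla\psi$. The lowest-order term is handled at once: $(\nabla\psi)\lambda\dv{v} = -\rho v\dv{v}$, which already produces one of the extra terms. For the remaining two terms I would differentiate $\lambda = -\rho(\nabla\psi)^{-1}v$, using the matrix identity $\partial[(\nabla\psi)^{-1}] = -(\nabla\psi)^{-1}[\partial(\nabla\psi)](\nabla\psi)^{-1}$ both for the time derivative (in $\partial_t\lambda$) and for the directional derivative along $v$ (in $(D\lambda)v = \partial_v\lambda$). This yields $(\nabla\psi)\partial_t\lambda = \rho\,\partial_t(\nabla\psi)(\nabla\psi)^{-1}v - \rho\,\partial_t v$ and $(\nabla\psi)(D\lambda)v = \rho\,\partial_v(\nabla\psi)(\nabla\psi)^{-1}v - \rho(Dv)v$.

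The crux is to relate $\partial_t(\nabla\psi)$ to $\partial_v(\nabla\psi)$ using the transport constraint \eqref{eq:transport_backward}. Commuting derivatives gives $\partial_t(\nabla\psi) = \nabla(\partial_t\psi) = -\nabla[(D\psi)v]$, and expanding the spatial derivative of this product splits it into a second-order term in $\psi$ and a first-order term $-(\nabla v)(\nabla\psi)$. Equality of mixed partials identifies the second-order term precisely with $\partial_v(\nabla\psi)$, so those contributions cancel between $(\nabla\psi)\partial_t\lambda$ and $(\nabla\psi)(D\lambda)v$. What survives is $-(\nabla v)(\nabla\psi)(\nabla\psi)^{-1}v = -(\nabla v)v$, where the inverse cancels cleanly because $\nabla v$ multiplies $\nabla\psi$ on the right. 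Collecting the surviving pieces and dividing by $\rho$ gives exactly \eqref{eq:evol_velocity_const_density}. The forward equations for $\phi$ and $\psi$ are not derived but are carried along as the constraints \eqref{eq:phi_evol} and \eqref{eq:transport_backward}.

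I expect the main obstacle to be the bookkeeping in this last step: one must track the index structure of $\nabla[(D\psi)v]$ carefully enough to see both that the second-order-in-$\psi$ terms coincide with $\partial_v(\nabla\psi)$ and that the first-order remainder organizes into $(\nabla v)(\nabla\psi)$ rather than a conjugated form $\nabla\psi\,(\ldots)(\nabla\psi)^{-1}$. This is also exactly where the constant-density case departs from the mass-conserving case: here there is no multiplier $\mu$ and no continuity equation to absorb the $-(\nabla v)v - v\dv{v}$ contributions, which is why they appear in \eqref{eq:evol_velocity_const_density} but are absent from \eqref{eq:evol_velocity}.
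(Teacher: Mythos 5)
Your proposal is correct and takes essentially the same route as the paper: the paper proves this theorem in one line by invoking Lemma~\ref{lem:lambda_t_w_t} with $w$ proportional to $-v$ (it writes $w=-\frac{1}{\rho}v$, evidently a typo for $w=-\rho v$ given the stationary condition $\rho v + (\nabla\psi)\lambda = 0$), and your computation---solving $\lambda = -\rho(\nabla\psi)^{-1}v$, differentiating the matrix inverse, and cancelling the second-order $\psi$ terms via equality of mixed partials and the transport constraint---is precisely the proof of that lemma inlined and specialized to this $w$. Your closing remark that the absence of $\mu$ and the continuity equation is what leaves $-(\nabla v)v - v\dv{v}$ in \eqref{eq:evol_velocity_const_density} correctly mirrors the cancellation mechanism of Lemma~\ref{lem:w_t_v_t} in the non-constant density case.
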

\begin{proof}
  We can apply Lemma~\ref{lem:lambda_t_w_t} in
  Appendix~\ref{app:velocity_evol} with $w = -\frac{1}{\rho} v$.
\end{proof}
The former equation \eqref{eq:evol_velocity_const_density} (without
the potential term) is known as the Euler-Poincar\'{e} equation (EPDiff), the
geodesic equation for the diffeomorphism group under the $L^2$ metric
\cite{miller2006geodesic}. This shows that one relationship between
Euler's equation and EPDiff is that Euler's equation is derived by a
time-varying density in the kinetic energy, which is optimized over
the mass distribution along with the velocity whereas EPDiff assumes a
constant mass density in the kinetic energy. The non-constant density
model (arising in Euler's equation) has a natural interpretation in
terms of Newton's equations.

\subsection{Acceleration with Energy Dissipation}
We now present the case of deriving the stationary conditions for a
system on the manifold of diffeomorphisms in which total energy
dissipates. This is important so the system will converge to a local
minima, and not oscillate about a local minimum forever, as the
evolution equations from the previous section. To do this, we consider
time varying scalar functions $a, b : [0,\infty) \to \R^+$, and define
the action integral, again defined on paths of diffeomorphisms, as
follows:
\begin{equation} \label{eq:action_diss}
  A = \int \left[ a_t T(v_t) - b_t U(\phi_t) \right] \ud t,
\end{equation}
where $a_t, b_t$ denote the values of the scalar at time $t$. We may
again go through finding the stationary conditions subject to the
mapping constraint \eqref{eq:transport_backward} and the continuity
equation constraint \eqref{eq:continuity_eqn}, with Lagrange multiplier
and then derive the forward evolution equations. The final result is
as follows:

\begin{theorem}[Evolution Equations for the Path of Least Action]
  \label{thrm:evol_final_dissip}
  The stationary conditions for the path of the action integral
  \eqref{eq:action_diss} subject to the constraints \eqref{eq:phi_evol} on
  the mapping and the continuity equation \eqref{eq:continuity_eqn}
  are given by the forward evolution equation
  \begin{equation} \label{eq:evol_velocity_dissp}
    a \partial_t v + a(Dv)v + (\partial_ta)v= -\frac{b}{\rho} \nabla U(\phi),
  \end{equation}
  which describes the evolution of the velocity. The same evolution
  equations as Theorem~\ref{thrm:evol_final_non_dissip} for the
  mappings \eqref{eq:phi_evol} and \eqref{eq:transport_backward}, and
  density hold \eqref{eq:continuity_eqn}.
\end{theorem}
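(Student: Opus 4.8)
The plan is to replay the non-dissipative derivation of Theorems~\ref{thrm:stationary_lagrange_mult} and~\ref{thrm:evol_final_non_dissip}, inserting the time weights $a_t,b_t$ and tracking the single extra term they create. First I would form the Lagrange-multiplier-augmented action exactly as in \eqref{eq:action_lagrange}, except with $a_t T(v)$ and $b_t U(\phi)$ in place of $T(v)$ and $U(\phi)$. The two constraint terms carrying the multiplier $\lambda$ (for the transport equation \eqref{eq:transport_backward}) and $\mu$ (for the continuity equation \eqref{eq:continuity_eqn}) are left untouched, since the constraints themselves are unchanged.

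Next I would compute the first variations with respect to $\rho$, $v$ and $\psi$, following Appendix~\ref{app:stat_cond_nondissip}. Because $a_t$ and $b_t$ depend only on time, they pass through the spatial variations and simply multiply the corresponding terms, giving the dissipative analogue of Theorem~\ref{thrm:stationary_lagrange_mult}:
\begin{align}
  \partial_t\lambda + (D\lambda)v + \lambda\dv{v} &= b(\nabla\psi)^{-1}\nabla U(\phi),\\
  a\rho v + (\nabla\psi)\lambda - \rho\nabla\mu &= 0,\\
  \partial_t\mu + \nabla\mu\cdot v &= \frac{a}{2}|v|^2.
\end{align}
The $\rho$-variation places $a$ on the kinetic term (hence the $\frac{a}{2}|v|^2$ in the $\mu$-equation), the $v$-variation places $a$ on the $\rho v$ term, and the $\psi$-variation places $b$ on the potential gradient; the $\lambda$-evolution is structurally identical to the non-dissipative case because its constraint involves neither $a$ nor $b$.

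Then I would eliminate $\lambda$ and $\mu$ to close the system into a forward equation for $v$, following Appendix~\ref{app:velocity_evol}. Solving the velocity condition gives $(\nabla\psi)\lambda = \rho(\nabla\mu - a v)$, so $\lambda$ is carried by the auxiliary field $\nabla\mu - a v$; applying Lemma~\ref{lem:lambda_t_w_t} converts the transport law for $\lambda$ into a material-derivative evolution, while taking the spatial gradient of the $\mu$-equation supplies $\frac{D}{Dt}(\nabla\mu)$ and lets the $\nabla\mu$ contributions cancel, exactly as in the non-dissipative derivation. The one genuinely new feature is that $a$ now depends on $t$: when the material derivative acts on $a v$, the identity $\frac{D}{Dt}(a v) = (\partial_t a)v + a\frac{Dv}{Dt}$ (using $\nabla a = 0$) produces the extra term $(\partial_t a)v$ that vanishes when $a$ is constant. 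Collecting terms yields \eqref{eq:evol_velocity_dissp}, equivalently the compact momentum form $\frac{D}{Dt}(a v) = -\frac{b}{\rho}\nabla U(\phi)$, while \eqref{eq:phi_evol}, \eqref{eq:transport_backward} and \eqref{eq:continuity_eqn} persist unchanged as the remaining stationary conditions.

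The main obstacle is this multiplier-elimination step: one must handle the noncommuting matrix factors $\nabla\psi$, $(\nabla\psi)^{-1}$ and $Dv$ correctly, verify that the convective terms from the $\lambda$-transport law combine with $\nabla(\frac{a}{2}|v|^2)$ to reproduce the clean form $a(Dv)v$, and confirm that undoing the $(\nabla\psi)^{-1}$ and $\frac{1}{\rho}$ factors turns the potential contribution into $-\frac{b}{\rho}\nabla U(\phi)$. As a consistency check, setting $a\equiv b\equiv 1$ collapses \eqref{eq:evol_velocity_dissp} to \eqref{eq:evol_velocity}, recovering Theorem~\ref{thrm:evol_final_non_dissip}.
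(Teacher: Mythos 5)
Your proposal is correct and follows essentially the same route as the paper's proof in Appendix~\ref{app:stationary_dissip}: you derive the same three dissipative stationary conditions as Theorem~\ref{thrm:stationary_lagrange_mult_dissip}, eliminate the multipliers via $w = \rho(\nabla\mu - a v)$ together with Lemma~\ref{lem:lambda_t_w_t} and the gradient of the $\mu$-equation (which gives $\nabla\left[\partial_t\mu + (D\mu)v\right] = a(\nabla v)v$, exactly as the paper uses), and correctly identify $\partial_t(av) = (\partial_t a)v + a\,\partial_t v$ as the single new term producing \eqref{eq:evol_velocity_dissp}. Your compact momentum form $\rho\,\frac{D}{Dt}(av) = -b\nabla U(\phi)$ matches the paper's intermediate result $\rho[\partial_t(av) + a(Dv)v] = -b\nabla U(\phi)$, so there is no gap.
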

\begin{proof}
  See Appendix~\ref{app:stationary_dissip}.
\end{proof}

If we consider certain forms of $a$ and $b$, then one can arrive at
various generalizations of Nesterov's schemes. In particular, the
choice of $a$ and $b$ below are those considered in
\cite{wibisono2016variational} to explain various versions of
Nesterov's schemes, which are optimization schemes in finite
dimensions.
\begin{theorem}[Evolution Equations for the Path of Least Action:
  Generalization of Nesterov's Method]
  If we choose  
  \[
    a_t = e^{\gamma_t-\alpha_t} \quad  \mbox{and} \quad b_t = e^{\alpha_t+\beta_t+\gamma_t} 
  \]
  where
  \[
    \alpha_t = \log p - \log t, \quad  
    \beta_t = p\log t + \log C, \quad  \gamma_t = p\log t,
  \]
  $C>0$ is a constant, and $p$ is a positive integer, then we will
  arrive at the evolution equation
  \begin{equation}
    \partial_t v = -\frac {p+1}{t} v - (Dv)v - \frac{1}{\rho}Cp^2
    t^{p-2}\nabla U(\phi).
  \end{equation}
  In the case $p=2$ and $C=1/4$ the evolution reduces to
  \begin{equation}  \label{eq:vel_evol_dissp_nesterov}
    \partial_t v = -\frac {3}{t} v - (Dv)v - \frac{1}{\rho}
    \nabla U(\phi).
  \end{equation}
\end{theorem}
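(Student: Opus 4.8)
The plan is to obtain this result purely as a specialization of the general dissipative evolution equation \eqref{eq:evol_velocity_dissp} from Theorem~\ref{thrm:evol_final_dissip}, namely $a\,\partial_t v + a(Dv)v + (\partial_t a)v = -\frac{b}{\rho}\nabla U(\phi)$. Since that theorem has already performed the variational work of producing the forward velocity equation for arbitrary time-weights $a_t$ and $b_t$, no further calculus of variations is required: the entire content of the statement is the algebraic evaluation of $a_t$, $\partial_t a_t$, and $b_t$ under the prescribed log-linear choices of $\alpha_t,\beta_t,\gamma_t$, followed by substitution. So I would first reduce the problem to computing the only two quantities that actually enter the equation once we divide through by $a_t$, namely the ratios $\partial_t a_t / a_t$ and $b_t / a_t$.

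First I would simplify $a_t$. Writing $a_t = e^{\gamma_t - \alpha_t}$ and inserting $\gamma_t = p\log t$ and $\alpha_t = \log p - \log t$ collapses the exponential of a sum of logarithms into a power of $t$: the exponent is $p\log t - \log p + \log t = (p+1)\log t - \log p$, so $a_t = t^{p+1}/p$ and hence $\partial_t a_t = (p+1)t^{p}/p$. The same bookkeeping applied to $b_t = e^{\alpha_t + \beta_t + \gamma_t}$, with $\beta_t = p\log t + \log C$, gives exponent $\log p + \log C + (2p-1)\log t$, so $b_t = Cp\, t^{2p-1}$.

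Then I would divide \eqref{eq:evol_velocity_dissp} by $a_t$ to put it in the stated form $\partial_t v = -(\partial_t a_t / a_t)\,v - (Dv)v - (b_t/(a_t \rho))\nabla U(\phi)$, and read off the two coefficients. The friction coefficient is $\partial_t a_t / a_t = \frac{(p+1)t^{p}/p}{t^{p+1}/p} = \frac{p+1}{t}$, and the potential coefficient is $b_t / a_t = \frac{Cp\,t^{2p-1}}{t^{p+1}/p} = Cp^2 t^{p-2}$. Substituting these yields exactly the claimed equation. The final specialization is immediate: setting $p=2$ makes the friction term $3/t$, and $C = 1/4$ makes $Cp^2 t^{p-2} = \tfrac14\cdot 4\cdot t^{0} = 1$, recovering \eqref{eq:vel_evol_dissp_nesterov}.

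Since all the genuine analysis is already encapsulated in Theorem~\ref{thrm:evol_final_dissip}, I do not expect a real obstacle here; the only place demanding care is the exponential–logarithm bookkeeping, i.e.\ keeping the signs and coefficients of the $\log t$, $\log p$, and $\log C$ terms straight when collapsing each $e^{(\cdot)}$ into a monomial in $t$. One worthwhile sanity check is scaling consistency: $a_t$ should scale so that the friction $\partial_t a_t / a_t$ decays like $1/t$, matching the $3/t$ damping characteristic of the continuum limit of Nesterov's scheme, which it does — confirming that these parameter choices indeed reproduce Nesterov acceleration in the infinite-dimensional diffeomorphism setting.
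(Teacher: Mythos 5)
Your proposal is correct and matches the paper's intended argument: the paper offers no separate proof for this theorem precisely because it is the direct specialization of Theorem~\ref{thrm:evol_final_dissip} that you carry out, and your algebra ($a_t = t^{p+1}/p$, $b_t = Cp\,t^{2p-1}$, hence friction $(p+1)/t$ and potential coefficient $Cp^2 t^{p-2}$ after dividing by $a_t$) is accurate, including the $p=2$, $C=1/4$ reduction to \eqref{eq:vel_evol_dissp_nesterov}.
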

The case $p=2$ was considered in \cite{wibisono2016variational} as the
continuum equivalent to Nesterov's original scheme in finite
dimensions. We can notice that this evolution equation is the same as
the evolution equations for the non-dissipative case
\eqref{eq:evol_velocity}, except for the term $-(3/t) v$. One can
interpret the latter term as a frictional dissipative term, analogous
to viscous resistance in fluids. Thus, even in this case the equation
has a natural interpretation that arises from Newton's laws.

\subsection{Second Order PDE for Acceleration}

We now convert the system of PDE for the forward mapping and velocity
into a second order PDE in the forward mapping itself. Interestingly,
this eliminates the non-linearity from the non-potential terms.

\begin{theorem}[Second Order PDE for the Forward Mapping]
  The accelerated optimization, arising from the stationarity of the
  action integral \eqref{eq:action_diss}, given by the system of PDE
  defined by \eqref{eq:evol_velocity_dissp} and the forward mapping
  \eqref{eq:phi_evol} is 
  \begin{equation}
    a\pder{^2 \phi}{t^2} + (\partial_t a)\pder{\phi}{t} +
    \frac{b}{\rho_0} \widetilde{\nabla} U(\phi) = 0,
  \end{equation}
  where $\rho_0$ is the initial density,
  $\widetilde{\nabla} U(\phi) = [\nabla
  U(\phi)\circ\phi]\det{\nabla\phi}$ is the gradient defined on the
  un-warped domain, i.e.,
  $\delta A \cdot \delta \phi = \int_{\R^n} \widetilde{\nabla}
  U(\phi)(x) \cdot \delta \phi(x) \ud x$ is satisfied for all
  perturbations $\delta \phi$ of $\phi$.
\end{theorem}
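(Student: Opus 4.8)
The plan is to eliminate the velocity $v$ from the coupled system by differentiating the flow equation \eqref{eq:phi_evol} a second time in $t$ and recognizing that the material derivative of $v$, evaluated along the flow, is exactly $\partial_t^2\phi$. Concretely, I would first differentiate $\partial_t\phi(x) = v(\phi(x))$ with respect to $t$ and apply the chain rule to obtain
\[
  \partial_t^2\phi = (\partial_t v)\circ\phi + [(Dv)\circ\phi](v\circ\phi) = \left[ \partial_t v + (Dv)v \right]\circ\phi,
\]
so that $\partial_t^2\phi = (Dv/Dt)\circ\phi$ while, directly from \eqref{eq:phi_evol}, $\partial_t\phi = v\circ\phi$. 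Rewriting the dissipative velocity evolution \eqref{eq:evol_velocity_dissp} as $a(Dv/Dt) + (\partial_t a)v = -(b/\rho)\nabla U(\phi)$ and composing both sides with $\phi$ (noting that $a,b$ depend only on $t$) then yields
\[
  a\,\partial_t^2\phi + (\partial_t a)\,\partial_t\phi = -\frac{b}{\rho\circ\phi}\,[\nabla U(\phi)]\circ\phi .
\]
This already produces the left-hand side of the claimed equation; what remains is to rewrite the right-hand side in terms of the \emph{initial} density $\rho_0$ and the explicit formula for $\widetilde{\nabla} U$.

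The key lemma is the Lagrangian (conserved) form of the continuity equation, namely that \eqref{eq:continuity_eqn} together with \eqref{eq:phi_evol} implies
\[
  \rho(t,\phi_t(x))\,\det\nabla\phi_t(x) = \rho_0(x).
\]
To prove this I set $m(t,x) = (\rho\circ\phi)\det\nabla\phi$ and differentiate in $t$. Using $\partial_t\nabla\phi = [(Dv)\circ\phi]\nabla\phi$ together with Jacobi's formula gives the Euler expansion identity $\partial_t\det\nabla\phi = (\det\nabla\phi)(\dv{v})\circ\phi$, while the chain rule gives $\partial_t(\rho\circ\phi) = [\partial_t\rho + \nabla\rho\cdot v]\circ\phi$. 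Combining these, $\partial_t m = (\det\nabla\phi)\,[\partial_t\rho + \dv{\rho v}]\circ\phi = 0$ by \eqref{eq:continuity_eqn}, so $m$ is constant in $t$; with $\phi_0 = \mathrm{id}$ (hence $\det\nabla\phi_0 = 1$) we conclude $m(t,x) = \rho_0(x)$, since $\det\nabla\phi>0$ for orientable diffeomorphisms so no absolute value is needed.

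Finally I would substitute $1/(\rho\circ\phi) = (\det\nabla\phi)/\rho_0$ into the composed velocity equation to get
\[
  a\,\partial_t^2\phi + (\partial_t a)\,\partial_t\phi = -\frac{b}{\rho_0}\,\big[[\nabla U(\phi)]\circ\phi\big]\det\nabla\phi = -\frac{b}{\rho_0}\,\widetilde{\nabla} U(\phi),
\]
using the stated definition $\widetilde{\nabla} U(\phi) = [\nabla U(\phi)\circ\phi]\det\nabla\phi$, which is the desired second-order PDE once all terms are moved to one side. The main obstacle is the mass-conservation lemma: one must verify that it is exactly the continuity-equation \emph{constraint} (not an independent modeling assumption) that forces the Jacobian-weighted density along trajectories to stay equal to $\rho_0$, and that the change of variables converting the Eulerian gradient $\nabla U(\phi)$ into the un-warped gradient produces precisely the factor $\det\nabla\phi$. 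Confirming that the explicit formula for $\widetilde{\nabla} U$ is consistent with its variational characterization $\delta A\cdot\delta\phi = \int_{\R^n} \widetilde{\nabla} U(\phi)\cdot\delta\phi\,\ud x$ is the one place where the change of variables genuinely requires care.
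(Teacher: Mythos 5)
Your proposal is correct and follows essentially the same route as the paper's proof: differentiate $\partial_t\phi = v\circ\phi$ in time so the transport terms $[(Dv)\circ\phi](v\circ\phi)$ cancel after substituting the dissipative velocity evolution \eqref{eq:evol_velocity_dissp}, then convert $1/(\rho\circ\phi)$ into $(\det\nabla\phi)/\rho_0$ via the Lagrangian mass-conservation identity $\rho_0 = (\rho_t\circ\phi)\det\nabla\phi$. The only (harmless) difference is how that identity is established --- the paper uses a change of variables on arbitrarily small sets $B$ invoking global mass preservation, whereas you derive it locally by showing $\partial_t\left[(\rho\circ\phi)\det\nabla\phi\right]=0$ via Jacobi's formula and the continuity equation \eqref{eq:continuity_eqn}, an equivalent and slightly more self-contained argument that also makes explicit the hypothesis $\phi_0=\mathrm{id}$, which the paper uses only implicitly.
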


\begin{proof}
  We differentiate the definition of the forward mapping in time to
  obtain \eqref{eq:phi_evol} and substituting the velocity evolution
  \eqref{eq:evol_velocity_dissp}:
  \begin{align*}
    \partial_{tt} \phi &= (\partial_t v)\circ \phi + [
                         (Dv)\circ\phi] \partial_t \phi \\
    &= -[ (Dv)\circ\phi ] v\circ\phi - \frac{\partial_t a}{a}
      v\circ\phi - \frac{b}{a} \frac{1}{\rho\circ\phi} \nabla
      U(\phi)\circ\phi + [ (Dv)\circ\phi ]\partial_t \phi \\
    &= - \frac{\partial_t a}{a} \partial_t \phi - \frac{b}{a} \frac{1}{\rho\circ\phi} \nabla
      U(\phi)\circ\phi
  \end{align*}
  We note the following for any $B\subset \R^n$, because of mass
  preservation, we have that
  \[
    \int_{B} \rho_0(x) \ud x = \int_{\phi(B)} \rho_t(y) \ud y = 
    \int_{B} \rho_t(\phi(x)) \det{\nabla\phi(x)} \ud x,
  \]
  where the last equality is obtained by a change of variables. Since
  we can take $B$ arbitrarily small, $\rho_0(x) = \rho_t(\phi(x))
  \det{\nabla\phi(x)}$. Using this last formula, we see that 
  \[
    \frac{1}{\rho\circ\phi} \nabla U(\phi)\circ\phi = 
    \frac{1}{\rho_0} \widetilde{\nabla} U(\phi),
  \]
  which proves the proposition.
\end{proof}

\subsection{Illustrative Potential Energy for Diffeomorphisms}
We now consider a standard potential for illustrative purposes in
simulations, and derive the gradient. The objective is for the
evolution equations in the previous section to minimize the potential,
which is a function of the mapping. Our evolution equations in the
previous section are general and work with \emph{any} potential; our
purpose in this section is not to advocate a particular potential, but
to show how the gradient of the potential is computed so that it can
be used in the evolution equations in the previous section. We
consider the standard Horn \& Schunck model for optical flow defined as 
\begin{equation} \label{eq:potential_HS}
  U(\phi) = \frac 1 2 \int_{\R^n} |I_1(\phi(x)) - I_0(x)|^2 \ud x +
  \frac 1 2 \alpha
  \int_{\R^n} |\nabla (\phi(x) - x )|^2 \ud x,
\end{equation}
where $\alpha>0$ is a weight, and $I_0, I_1$ are images. The first term is
the data fidelity which measures how close $\phi$ deforms $I_1$ back
to $I_0$ through the squared norm, and the second term penalizes
non-smoothness of the displacement field, given by $\phi(x)-x$ at the
point $x$. Notice that the potential is a function of only the mapping
$\phi$, and not the velocity. 

We now compute the functional gradient of $U$ with respect to the
mapping $\phi$, denoted by the expression $\nabla U(\phi)$. This
gradient is defined by the relation (see
Appendix~\ref{app:funct_grads})
$\delta U \cdot \delta \phi = \int_{\phi(\R^n)} \nabla U(\phi) \cdot
\delta \phi \ud x $, i.e., the functional gradient satisfies the
relation that the $\mathbb{L}^2$ inner product of it with any
perturbation $\delta \phi$ of $\phi$ is equal to the variation of the
potential $U$ with respect to the perturbation $\delta \phi$. With
this definition, one can show that (see
Appendix~\ref{app:funct_grads})
\begin{equation} \label{eq:potential_illustrative}
  \nabla U(\phi) = 
  \left[ ( I_1 - I_0\circ\psi ) \nabla I_1 - \alpha (\Delta \phi)\circ\psi \right]
  \det \nabla \psi,
\end{equation}
where $\det$ denotes the determinant.

We can also see that the gradient defined on the un-warped domain is 
\begin{equation}
  \widetilde{\nabla} U(\phi) =
  ( I_1\circ\phi - I_0 ) \nabla I_1\circ\phi - \alpha \Delta \phi,
\end{equation}
therefore, the generalization of Nesterov's method on the original
domain itself, in this case is 
\begin{equation}
    \pder{^2 \phi}{t^2} + \frac 3 t  \pder{\phi}{t} -
    \frac{\alpha}{\rho_0} \Delta \phi +
    \frac{1}{\rho_0}   ( I_1\circ\phi - I_0 ) \nabla I_1\circ\phi  = 0,
\end{equation}
which is a damped \emph{wave equation}.

\section{Experiments}
We now show some examples to illustrate the behavior of our
generalization of accelerated optimization to the infinite dimensional
manifold of diffeomorphisms. We compare to standard (Riemannian $L^2$)
gradient descent to illustrate how much one can gain by incorporating
acceleration, which requires little additional effort over gradient
descent. Over gradient descent, acceleration requires only to update
the velocity by the velocity evolution in the previous section, and
the density evolution. Both these evolutions are cheap to compute
since they only involve local updates. Note the gradient descent of
the potential $U$ is given by choosing $v = -\nabla U(\phi)$, the
other evolution equation for the mapping $\phi$ \eqref{eq:phi_evol}
and $\psi$ \eqref{eq:transport_backward} remains the same, and no
density evolution is considered. We note that we implement the
equations as they are, and there is no additional processing that is
now common in optical flow methods (e.g., no smoothing images nor
derivatives, no special derivative filters, no multi-scale techniques,
no use of robust norms, median filters, etc). Although our equations
are for diffeomorphisms on all of $\R^n$, in practice be have finite
images, and the issue of boundary conditions come up. For simplicity
to illustrate our ideas, we choose periodic boundary conditions. We
should note that our numerical scheme (see
Appendix~\ref{app:discretization}) for implementing accelerated
gradient descent is quite basic and not final, and a number of speed
ups and / or refinements to the numerics can be done, which we plan to
explore in the near future. Thus, at this point we do not compare the
method to current optical flow techniques since our numerics are not
finalized. Our intention is to show the promise of acceleration and
that simply by using acceleration, one can get an impractical
algorithm (gradient descent) to become practical, especially with
respect to speed.

In all the experiments, we choose the step size to satisfy CFL
conditions. For ordinary gradient descent we choose
$\Delta t < 1/(4\alpha)$, for accelerated gradient descent we have the
additional evolution of the velocity
\eqref{eq:vel_evol_dissp_nesterov}, and our numerical scheme has CFL
condition $\Delta t < 1/\max_{x\in\Omega} \{ |v(x)|, |Dv(x)|
\}$. Also, because there is a diffusion according to regularity, we
found that
$\Delta t < 1/(4\alpha \cdot \max_{x\in\Omega} \{ |v(x)|, |Dv(x)| \})$
gives stable results, although we have not done a proper Von-Neumann
analysis, and in practice we do see we can choose a higher step
size. The step size for accelerated gradient descent is lower in our
experiments than accelerated gradient descent.

In all experiments, the initialization is $\phi(x) = \psi(x) = x$,
$v(x)=0$, and $\rho(x) = 1/|\Omega|$ where $|\Omega|$ is the area of
the domain of the image.

\subsubsection{Convergence analysis}
In this experiment, the images are two white squares against a black
background. The sizes of the squares are $50\times 50$ pixels wide,
and the square (of size $20\times 20$) in the first image is
translated by $10$ pixels to form the second image. Small images are
chosen due to the fact gradient descent is too impractically slow for
reasonable sized images without multi-scale approaches that even
modest sized images (e.g., $256\times 256$) do not converge in a
reasonable amount of time, and we will demonstrate this in an
experiment later. Figure~\ref{fig:potential_vs_iter} shows the plot of
the potential energy \eqref{eq:potential_HS} of both gradient descent
and accelerated gradient descent as the evolution progresses. Here
$\alpha = 5$ (images are scaled between 0 and 1). Notice that
accelerated gradient descent very quickly accelerates to a global
minimum, surpasses the global minimum and then oscillates until the
friction term slows it down and then it converges very quickly. Notice
that this behavior is expected since accelerated gradient descent is
not a strict descent method (it does not necessarily decrease the
potential energy each step).  Gradient descent very slowly decreases
the energy each iteration and eventually converges.

\begin{figure}
  \centering
  \includegraphics[totalheight=2.5in]{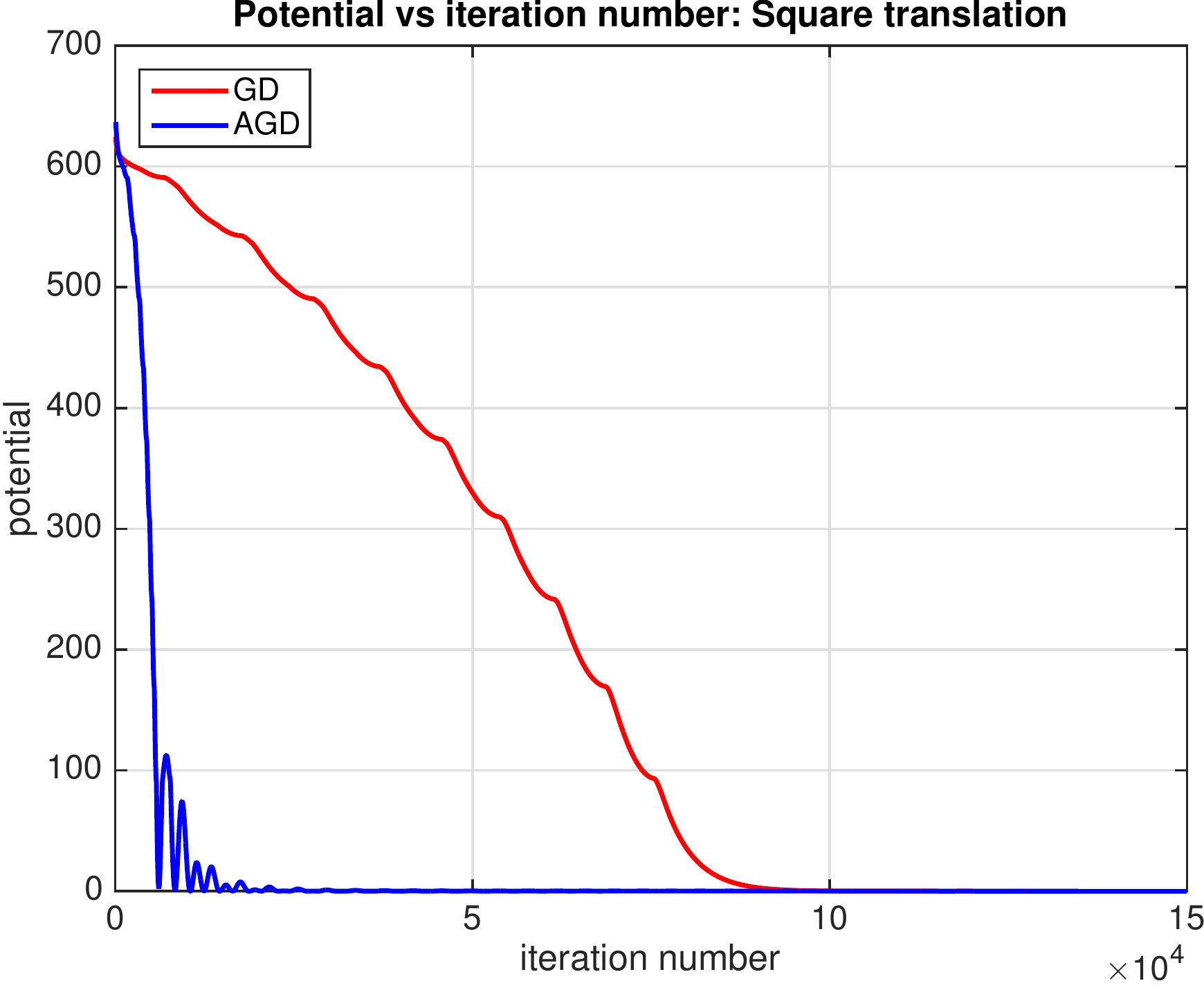}
  \caption{{\bf Convergence Comparison}: Two binary images with
    squares in which the square is translated are registered. The
    value of the functional (to be minimized) versus the iteration
    number is shown for both gradient descent (GD) and accelerated
    gradient descent (AGD).}
  \label{fig:potential_vs_iter}
\end{figure}

We now repeat the same experiment, but with different images to show
that this behavior is not restricted to the particular choice of
images, one a translation of the other. To this end, we choose the
images again to be $50\times 50$. The first image has a square that is
$17\times 17$ and the second image has a rectangle of size
$20\times 14$ and is translated by $8$ pixels. We choose the
regularity $\alpha=2$, since the regularity should be chosen smaller
to account for the stretching and squeezing, resulting in a non-smooth
flow field. A plot of results of this simulation is shown in
Figure~\ref{fig:potential_vs_iter_scaling}. Again accelerated gradient
accelerates very quickly at the start, then oscillates and the
oscillations die down and then it converges. This time the potential
does not go to zero since the final flow is not a translation and thus
the regularity term is non-zero. Gradient descent converges faster
than the case of translation due to larger $\alpha$ and thus larger
step size. However, it appears to be stuck in a higher energy
configuration. In fact, gradient descent has not fully converged -
gradient descent is slow in adapting to the scale changes and becomes
extremely slow in stretching and squeezing in different directions. We
verify that gradient descent has not fully converged by plotting just
the first term of the potential, i.e., the reconstruction error, which
is zero for accelerated gradient descent at convergence, indicating
that the flow correctly reconstructs $I_0$ from $I_1$. On the other
hand, gradient descent has an error of about $50$, indicating the flow
does not fully warp $I_1$ to $I_0$, and therefore it not the correct
flow. This does not appear to be a local minimum, just slow convergence.

\begin{figure}
  \centering
  \includegraphics[totalheight=2.5in]{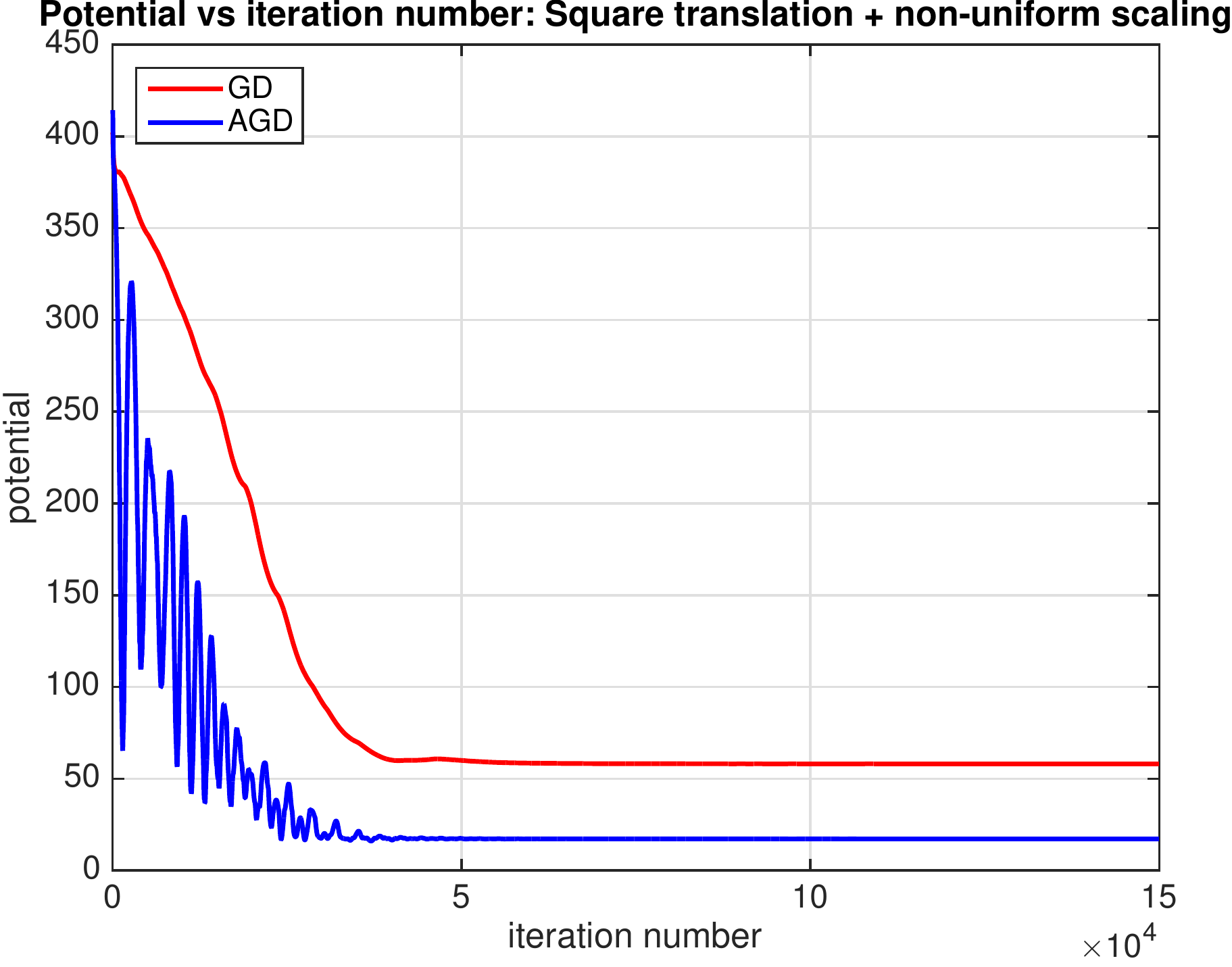}
  \includegraphics[totalheight=2.5in]{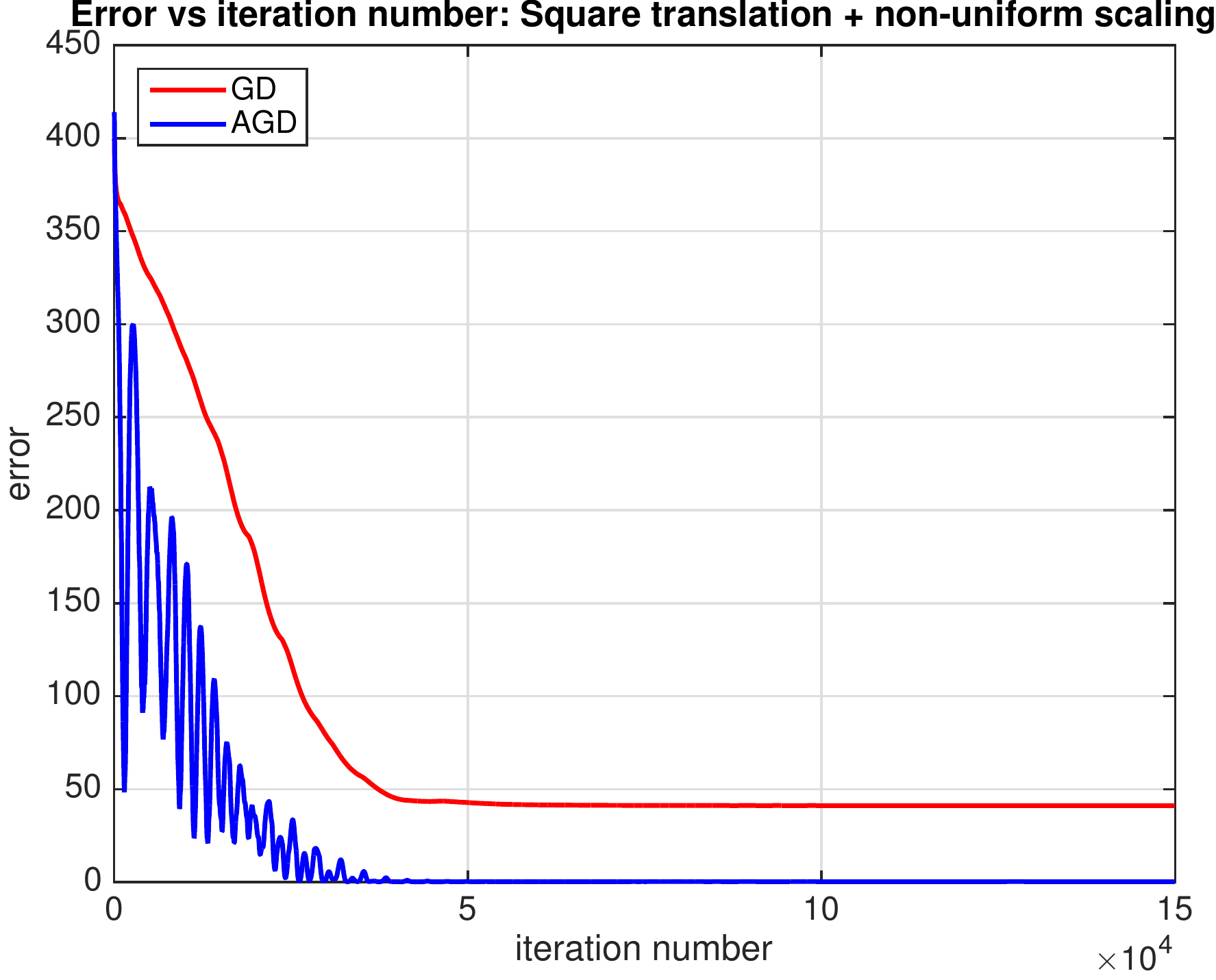}
  \caption{{\bf Convergence Comparison}: Two images are registered,
    each are binary images. The first is a square and the second image
    is a translated and non-uniformly scaled version of the square in
    the first image. [Left]: The cost functional to be minimized
    versus the iteration number is shown for both gradient descent
    (GD) and accelerated gradient descent (AGD). AGD converges to a
    lower energy solution quicker. [Right]: Note that GD did not fully
    converge as the convergence is extremely slow in obtaining fine
    scale details of the non-uniform scaling. This is verified by
    plotting the image reconstruction error:
    $\|I_1\circ \phi - I_0\|$, which
    shows that AGD reconstructs $I_0$ with zero error. }
  \label{fig:potential_vs_iter_scaling}
\end{figure}

We again repeat the same experiment, but with real images from a
cardiac MRI sequence, in which the heart beats. The transformation
relating the images is a general diffeomorphism that is not easily
described as in the previous experiments. The images are of size
$256\times 256$. We choose $\alpha=0.02$. A plot of the potential
versus iteration number for both gradient descent (GD) and accelerated
gradient descent (AGD) is shown in the left of
Figure~\ref{fig:cardiac_expt}.  The convergence is quicker for
accelerated gradient descent. The right of
Figure~\ref{fig:cardiac_expt} shows the original images and the images
warped under both the result from gradient descent and accelerated
gradient descent, and that they both produce a similar correct warp,
but accelerated gradient obtains the warp in much fewer iterations.

\begin{figure}
  \centering
  \begin{minipage}{0.3\textwidth}
    \centering
    \includegraphics[totalheight=2.5in]{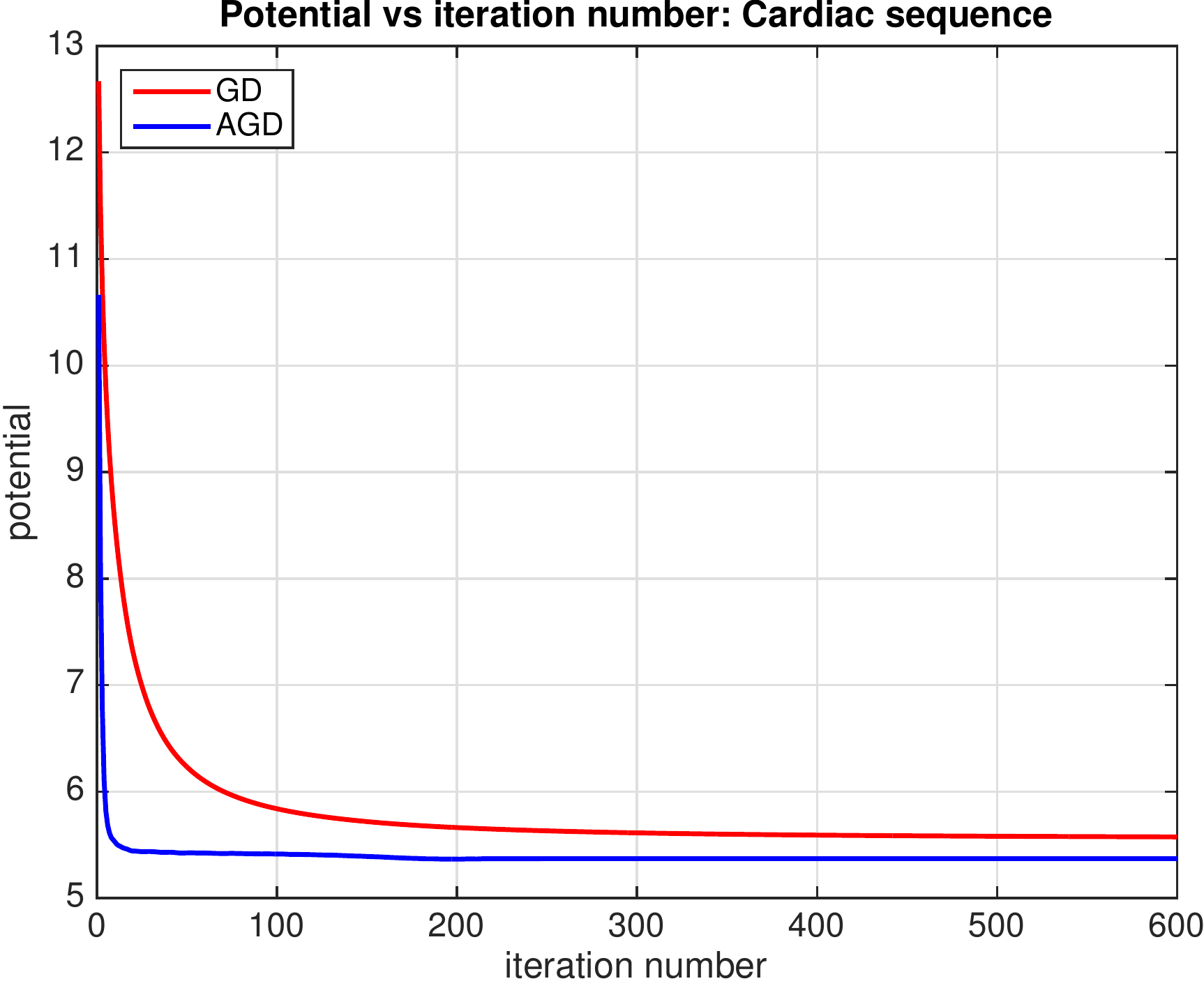}
  \end{minipage}%
  \begin{minipage}{0.8\textwidth}
    \centering
    \begin{tabular}{c@{\hspace{0.05in}}c}
      $I_1$ & $I_0$ \\
      \includegraphics[totalheight=1.5in]{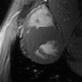} &
      \includegraphics[totalheight=1.5in]{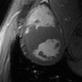} \\
      $I_1\circ \phi_{gd}$ & $I_1\circ \phi_{agd}$ \\
      \includegraphics[totalheight=1.5in]{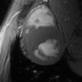} &
      \includegraphics[totalheight=1.5in]{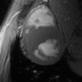}
    \end{tabular}
  \end{minipage}
  \caption{{\bf Convergence Comparison}: Two MR cardiac images from a
    sequence are registered. The images are related through a general
    deformation. [Left]: A plot of the potential versus the iteration
    number in the minimization using gradient descent (GD) and
    accelerated gradient descent (AGD). AGD converges at a quicker
    rate. [Right]: The original images and the back-warped images using
    the recovered diffeomorphisms. Note that $I_1\circ\phi$ should
    appear close to $I_0$. Both methods seem to recover a similar
    transformation, but AGD recovers it faster.}
  \label{fig:cardiac_expt}
\end{figure}

\comment{
 Current
variational approaches to optical flow would not tolerate this amount
of displacement without the use of a pyramid approach. We compare
standard gradient descent, accelerated optimization without energy
dissipation, and the accelerated generalization of Nesterov, which
dissipates energy. Figure~\ref{fig:square_trans} shows snapshots of
the evolution for each. We see that gradient descent completely fails
and is immediately stuck, this is true for any amount of regularity
$\alpha$ chosen ($\alpha$ ranging from $0.001$ to $1000$ were
experimented with). The non-dissipative version of accelerated
gradient descent starts slowly, picks up acceleration, quickly
transforms the displacement to translation (even though the algorithm
solves for a general diffeomorphism), reaches the global minimum, and
then oscillates past the minimum and then oscillates back and forth.
Acceleration with dissipation accelerates more slowly, quickly turns
the displacement into a translation, reaches the global minimum, and
then moves past it, but then oscillates back and forth. The
oscillations die down quickly, and the method converges to the global
minimum. An plot of the potential energy versus iteration number of
all the schemes is shown in
Figure~\ref{fig:energy_plot_square_trans}. One gets similar behavior
for a wide range of $\alpha$. The particular value of $\alpha$ chosen
in the figure is $\alpha = 0.1$. In all subsequent experiments, we
choose $\alpha=0.1$, and images range between $0$ and $255$.

\begin{figure}
  \label{fig:square_trans}
\end{figure}

\begin{figure}
  \label{fig:energy_plot_square_trans}
\end{figure}
}

\subsubsection{Convergence analysis versus parameter settings}
We now analyze the convergence of accelerated gradient descent and
gradient descent as a function of the regularity $\alpha$ and the
image size. To this end, we first analyze an image pair of size
$50\times 50$ in which one image has a square of size $16\times 16$
and the other image is the same square translated by $7$ pixels. We
now vary $\alpha$ and analyze the convergence. In the left plot of
Figure~\ref{fig:speed_vs_alpha}, we show the number of iterations
until convergence versus the regularity $\alpha$. As $\alpha$
increases, the number of iterations for both gradient descent and
accelerated gradient descent increase as expected since there is a
inverse relationship between $\alpha$ and the step size. However, the
number of iterations for accelerated gradient descent grows more
slowly. In all cases, the algorithm is run until the flow field
between successive iterations does not change according to a fixed
tolerance. In all cases, the flow achieves the ground truth flow.

Next, we analyze the number of convergence iterations versus the image
size. To this end, we again consider binary images with squares of
size $16\times 16$ and translated by $7$ pixels. However, we vary the
image size from $50\times 50$ to $200 \times 200$. We fix
$\alpha = 8$. Now we show the number of iterations to convergence
versus the image size. This is shown in the right plot of
Figure~\ref{fig:speed_vs_alpha}. Gradient descent is impractically
slow for all the sizes considered, and the number of iterations
quickly increases with image size (it appears to be an exponential
growth). Accelerated gradient descent, surprisingly, appears to have
very little or no growth with respect to the image size. Of course one
could use multi-scaling pyramid approaches to improve gradient
descent, but as soon as one goes to finer scales, gradient descent is
incredibly slow even when the images are related by small
displacements. Simple acceleration makes standard gradient descent
scalable with just a few extra local updates.

\begin{figure}
  \centering
  \includegraphics[totalheight=2.5in]{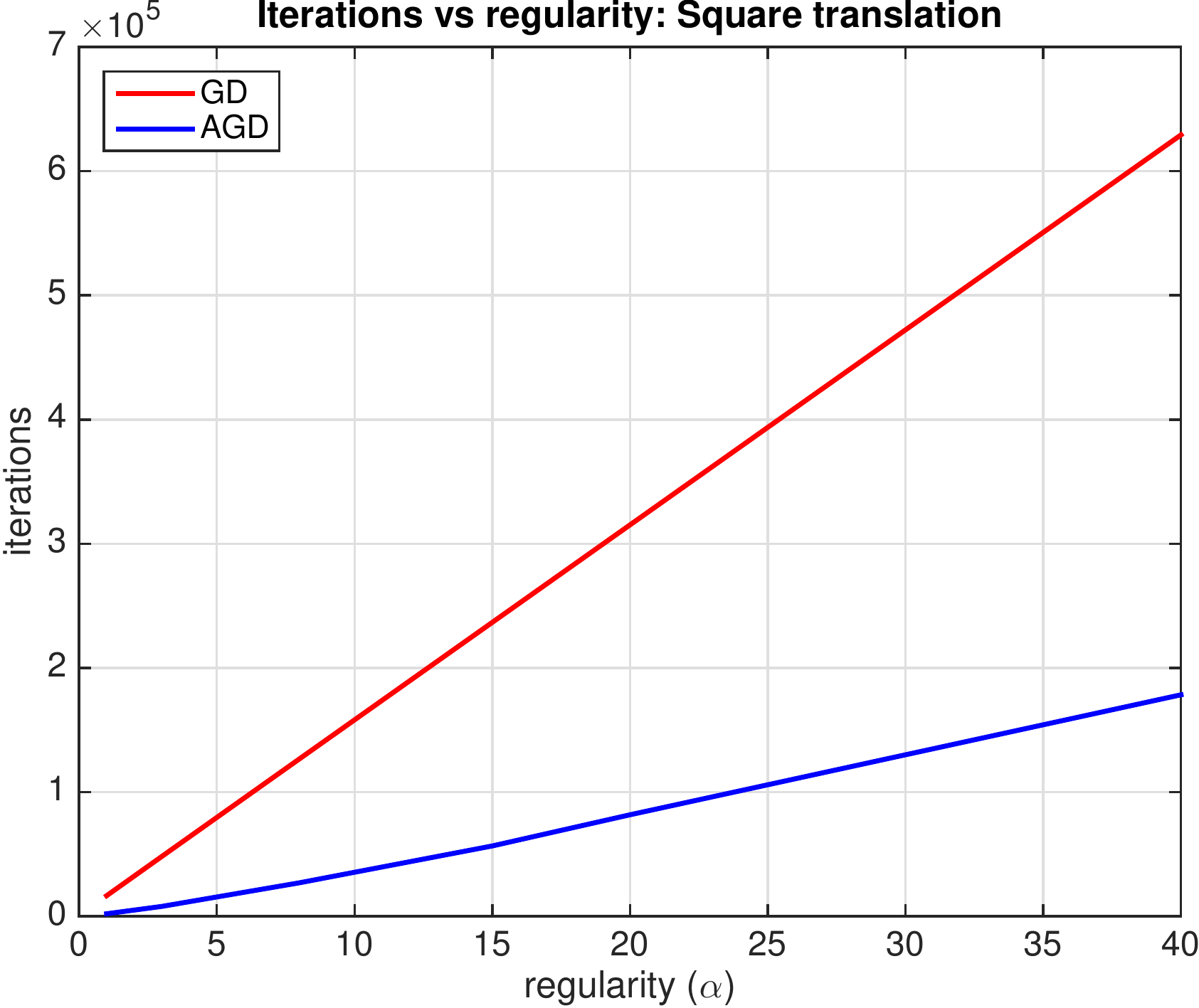}
  \includegraphics[totalheight=2.5in]{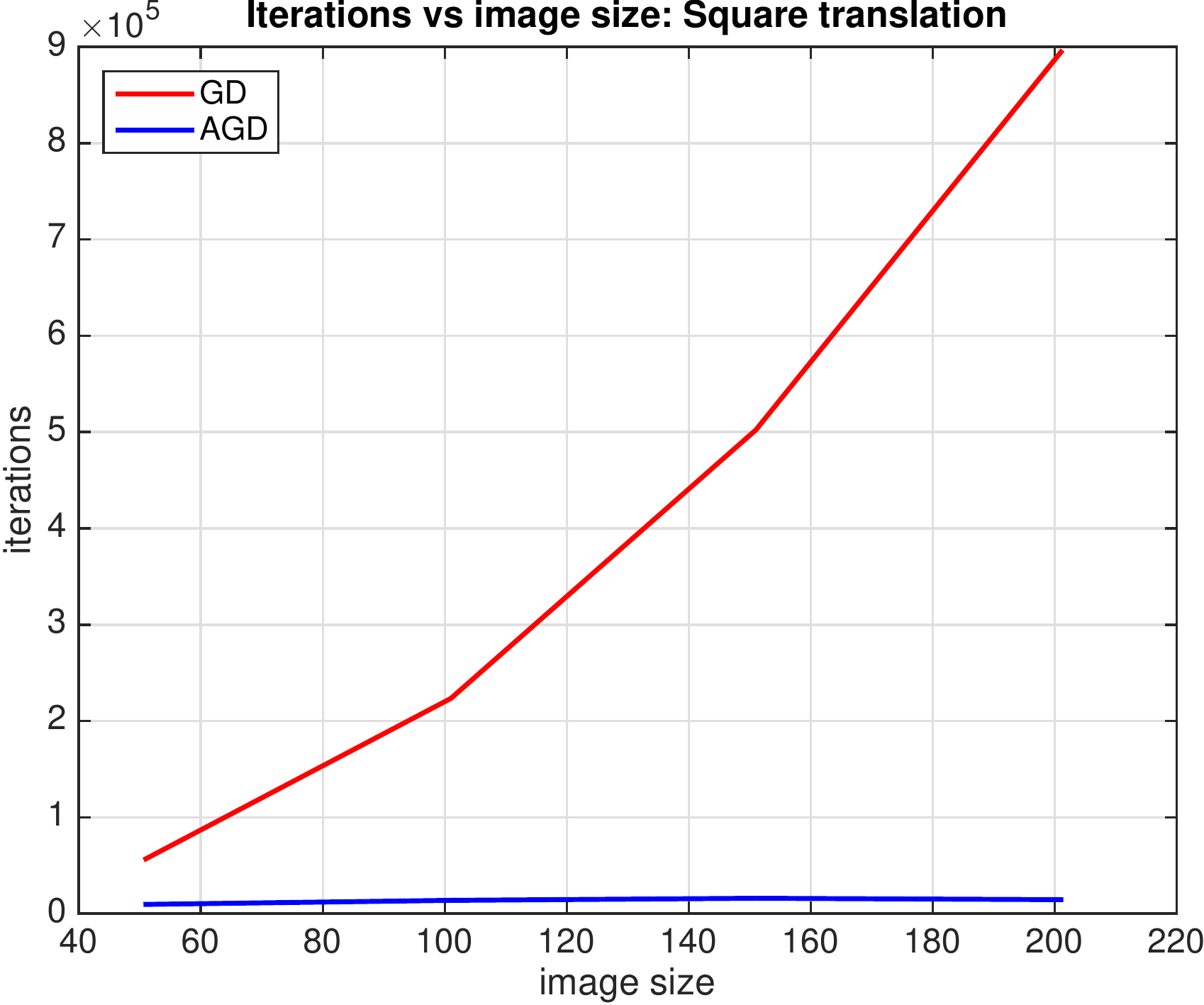}
  \caption{[Left]: {\bf Convergence Comparison as a Function of
      Regularity}: Two binary images (a square and a translated
    square) are registered with varying amounts of regularization
    $\alpha$ for gradient descent (GD) and accelerated gradient
    descent (AGD). [Right]: {\bf Convergence Comparison as a Function
      of Image Size}: We keep the squares in the images and $\alpha=3$
    fixed, but we vary the size (height and width) of the image and
    compare GD with AGD. Very quickly, gradient descent becomes
    impractical due to extremely slow convergence.}
  \label{fig:speed_vs_alpha} 
\end{figure}

\comment{
We repeat the previous experiment with the first image being a square
and the second image being a translated and scaled version of the
square to form a rectangle. The square is $40\times 40$ pixels and the
rectangle is $20 \times 60$ pixels. Again gradient descent
fails. Accelerated dissipative optimization (shown in
Figure~\ref{fig:square_trans_scaled}) correctly recovers the mapping.

\begin{figure}
  \label{fig:square_trans_scaled}
\end{figure}
}

\subsubsection{Analysis of Robustness to Noise}
We now analyze the robustness of gradient descent and accelerated
gradient descent to noise. We do this to simulate robustness to
undesirable local minima. We choose to use salt and pepper noise to
model possible clutter in the image. We consider images of size
$50\times 50$. We fix $\alpha = 1$ in all the simulations and vary the
noise level; of course one could increase $\alpha$ to increase
robustness to noise. However, we are interested in understanding the
robustness to noise of the optimization algorithms themselves rather
than changing the potential energy to better cope with noise.  First,
we consider a square of size $16\times 16$ in the first binary image
and the same square translated by $4$ pixels in the second image. We
plot the error in the flow (measured as the average endpoint error of
the flow returned by the algorithm against ground truth flow) versus
the noise level. The result is shown in the left plot of
Figure~\ref{fig:noise_stability}. This shows that accelerated gradient
descent degrades much slower than gradient
descent. Figure~\ref{fig:noise_stab_trans_img} shows visual comparison
of the final results where we show $I_1\circ\phi$ and compare it to
$I_0$ for both accelerated gradient descent and gradient descent.

We repeat the same experiment to show that this trend is not just with
this configuration of images. To this end, we experiment with
$50\times 50$ images one with a square of size $15\times 15$ and a
rectangle that is size $20\times 10$ and translated by $5$ pixels. We
again fix the regularity to $\alpha=1$. The result of the experiment
is plotted in the right of Figure~\ref{fig:noise_stability}. A similar
trend of the previous experiment is observed: accelerated gradient
descent degrades much less than gradient descent. Note we have
measured accuracy as the average reconstruction error with the
original (non-noisy) images. This is because the ground truth flow is
not known. Figure~\ref{fig:noise_stab_dil_img} shows visual comparison
of the final results.

\begin{figure}
  \centering
  \includegraphics[totalheight=2.5in]{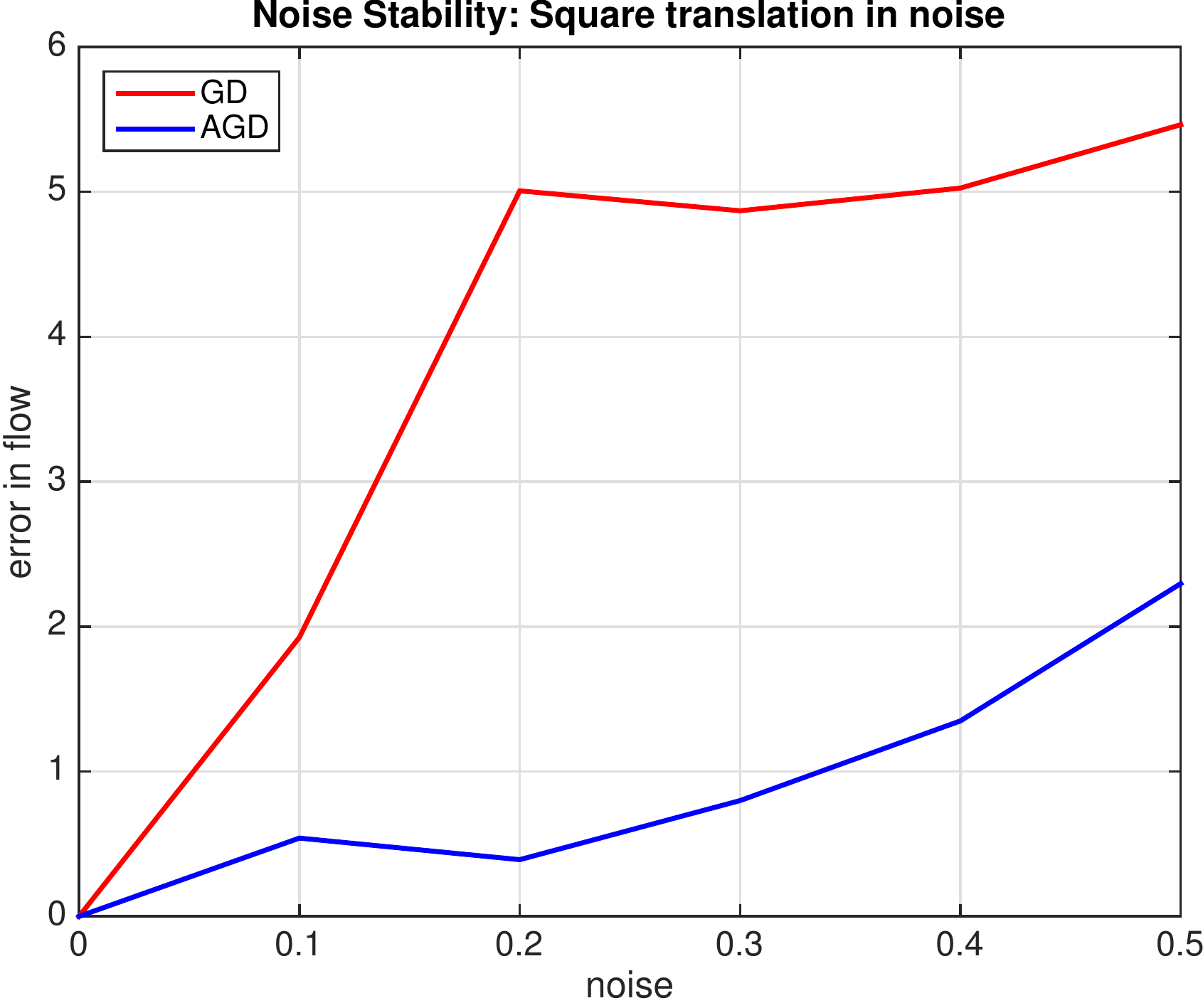}
  \includegraphics[totalheight=2.5in]{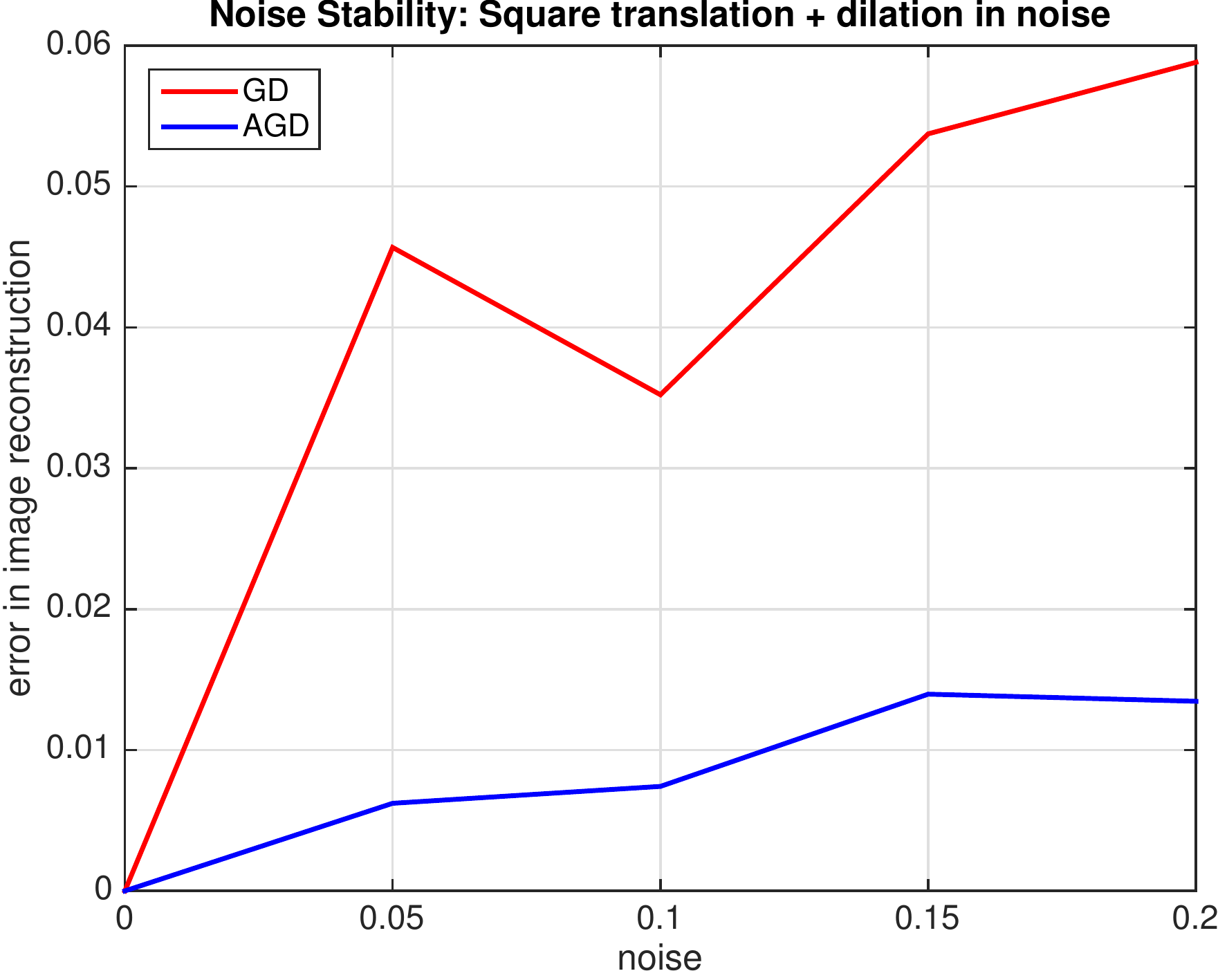}
  \caption{{\bf Analysis of Stability to Noise}: We add salt and
    pepper noise with varying intensity to binary images and then
    register the images. We plot the error in the recovered flow of
    both gradient descent (GD) and accelerated gradient descent (AGD)
    versus the level of noise. The value of $\alpha$ is kept
    fixed. The error is measured by the average endpoint error of the
    flow. [Left]: The first image is formed from a square and the
    second image is the same square but translated. [Right]: The first
    image is a square and the second image is the non-uniformly scaled
    and translated square. The error is measured as the average image
    reconstruction error.}
  \label{fig:noise_stability}
\end{figure}

\def\hExpt1img{0.75in}
\def\himgsp{0.5in}
\begin{figure}
  \centering
  {\small
  \begin{tabular}{ll}
    $\quad\quad\quad I_1\quad\quad$ $\quad \quad\quad I_0 \quad$ $\quad\quad\quad  I_1\circ\phi_{gd}$
    $\quad\quad I_1\circ\phi_{agd}$ & $\quad\quad\quad I_1\quad\quad$ $\quad \quad\quad I_0 \quad$ $\quad\quad\quad  I_1\circ\phi_{gd}$
    $\quad\quad I_1\circ\phi_{agd}$ \\
   \rotatebox{90}{$\quad\sigma = 0.0$}\includegraphics[totalheight=\hExpt1img]{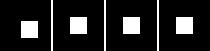} &
  \rotatebox{90}{$\quad\sigma = 0.1$}\includegraphics[totalheight=\hExpt1img]{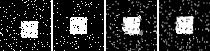}\\
  \rotatebox{90}{$\quad\sigma = 0.2$}\includegraphics[totalheight=\hExpt1img]{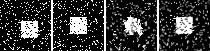} &
  \rotatebox{90}{$\quad\sigma = 0.3$}\includegraphics[totalheight=\hExpt1img]{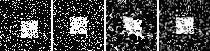}\\
  \rotatebox{90}{$\quad\sigma = 0.4$}\includegraphics[totalheight=\hExpt1img]{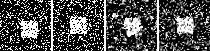} &
  \rotatebox{90}{$\quad\sigma = 0.5$}\includegraphics[totalheight=\hExpt1img]{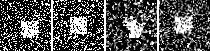}\\
  \end{tabular}
}
\caption{{\bf Visual Comparison on Square Translation in Noise
    Experiment}. The above show the visual results of the noise
  robustness experiment. For each row group of images: the two original images, the
  warped image by gradient descent, and the warped image by
  accelerated gradient descent. The last two images should resemble
  the second if the registration is correct.}
\label{fig:noise_stab_trans_img}
\end{figure}

\begin{figure}
  \centering
  {\small
  \begin{tabular}{ll}
    $\quad\quad\quad I_1\quad\quad$ $\quad \quad\quad I_0 \quad$ $\quad\quad\quad  I_1\circ\phi_{gd}$
    $\quad\quad I_1\circ\phi_{agd}$ & $\quad\quad\quad I_1\quad\quad$ $\quad \quad\quad I_0 \quad$ $\quad\quad\quad  I_1\circ\phi_{gd}$
    $\quad\quad I_1\circ\phi_{agd}$ \\
   \rotatebox{90}{$\quad\sigma = 0.00$}\includegraphics[totalheight=\hExpt1img]{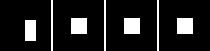} &
  \rotatebox{90}{$\quad\sigma = 0.05$}\includegraphics[totalheight=\hExpt1img]{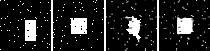}\\
  \rotatebox{90}{$\quad\sigma = 0.10$}\includegraphics[totalheight=\hExpt1img]{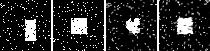} &
  \rotatebox{90}{$\quad\sigma = 0.15$}\includegraphics[totalheight=\hExpt1img]{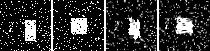}\\
  \rotatebox{90}{$\quad\sigma = 0.20$}\includegraphics[totalheight=\hExpt1img]{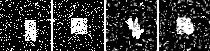} &
  \\
  \end{tabular}
}
\caption{{\bf Visual Comparison on Square Non-Uniform Scaling and
    Translation in Noise Experiment}. The above show the visual
  results of the noise robustness experiment. For each row group of
  images: the two original images, the warped image by gradient
  descent, and the warped image by accelerated gradient descent. The
  last two images should resemble the second if the registration is
  correct.}
\label{fig:noise_stab_dil_img}
\end{figure}

\comment{
We repeat the first experiment with the same images, but with the
images corrupted by salt and pepper noise to model clutter. 
The
results of non-dissipative accelerated gradient descent are shown in
Figure~\ref{fig:noisy_square}, which shows convergence to the desired
solution.

\begin{figure}
  \label{fig:noisy_square}
\end{figure}
}

\comment{
\subsubsection{Cardiac Image Registration}
Finally, we show accelerated non-dissipative optimization working on a
real image sequence pair. The pair of images is sampled from a cardiac
magnetic resonance image (MRI) sequence of a heart beating. The
mapping between frames is more than scaling and translating, and is
described by a more general diffeomorphism. Figure~\ref{fig:cardiac}
shows the result, and that accelerated gradient descent is able to
register quite well the two images. This shows that accelerated
gradient descent can capture more general diffeomorphisms.

\begin{figure}
  \label{fig:cardiac}
\end{figure}
}

\section{Conclusion}
We have generalized accelerated optimization, in particular Nesterov's
scheme, to infinite dimensional manifolds. This method is general and
applies to optimizing any functional on an infinite dimensional
manifold.  We have demonstrated this for the class of diffeomorphisms
motivated by variational optical flow problems in computer vision. The
main objective of the paper was to introduce the formalism and derive
the evolution equations that are PDEs. The evolution equations are
natural extensions of mechanical principles from fluid mechanics, and
in particular connect to optimal mass transport. They require
additional evolution equations over gradient descent, i.e., a velocity
evolution and a density evolution, but that does not significantly add
to the cost of $L^2$ gradient descent per iteration since the updates
are all local, i.e., computation of derivatives. Our numerical scheme
to implement these equations used entropy conditions, which were
employed to cope with shocks and fans of the underlying PDE. Our
numerical scheme is not final and could be improved, and we plan to
explore this in future work. Experiments on toy examples nevertheless
demonstrated the advantages of speed and robustness to local minima
over gradient descent, and illustrated the behavior of accelerated
gradient descent. Just by simple acceleration, gradient descent,
unusable in practice due to scalability with image size, became
usable. One area that should be explored further is the choice of the
time-explicit functions $a,b$ in the generalized Lagrangian. These
were chosen to coincide with the choices to produce the continuum
limit of Nesterov's scheme finite dimensions, which are designed for
the convex case to yield optimal convergence. Since the energies that
we consider are non-convex, these may no longer be optimal. Of
interest would be a design principle for choosing $a,b$ so as to
obtain optimal convergence rates. A follow-up question would then be
whether the discretization of the PDEs gives optimal rates in
discrete-time.  Another issue is that we assumed that the domain of
the diffeomorphism was $\R^n$, but images are compact; we by-passed
this complication by assuming periodic boundary conditions. Future
work will look into proper treatment of the boundary.

\appendix

\subsection{Functional Gradients}
\label{app:funct_grads}

\begin{definition}[Functional Gradients]
  Let $U : \diff \to \R$. The gradient (or functional derivative) of
  $U$ with respect to $\phi \in \diff$, denoted $\nabla U(\phi)$, is
  defined as the $\nabla U(\phi) \in T_{\phi} \diff$ that satisfies
  \begin{equation}
    \delta U(\phi) \cdot v = \int_{\phi(\R^n)} \nabla U(\phi)(x) \cdot
    v(x) \ud x
  \end{equation}
  for all $v\in T_{\phi} \diff$. The left hand side is the directional
  derivative and is defined as 
  \begin{equation}
    \delta U(\phi) \cdot v := \left. \der{}{\varepsilon} U( \phi +
      \varepsilon v ) \right|_{\varepsilon= 0}.
  \end{equation}
  Note that $(\phi + \varepsilon v)(x) = \phi(x) + \varepsilon v(
  \phi(x) )$ for $x\in \R^n$.
\end{definition}

We now show the computation of the gradient for the illustrative
potential \eqref{eq:potential_illustrative} used in this paper. First,
let us consider the data term $U_1(\phi) = \int_{\R^n} |I_1(\phi(x)) -
I_0(x)|^2 \ud x$ then 
\[
  \delta U_1(\phi) \cdot \delta\phi = \int_{\R^n} 2(I_1(\phi(x)) -
  I_0(x)) DI_1(\phi(x)) \widehat{\delta \phi}(x) \ud x = 
  \int_{\phi(\R^n)} 2( I_1(x) - I_0(\psi(x)) ) DI_1(x) \delta \phi(x)
  \det{\nabla\psi(x)}\ud x,
\]
where $\widehat{\delta\phi} = \delta\phi\circ\phi$, $\psi = \phi^{-1}$
and we have performed a change of variables. Thus,
$\nabla U_1 = 2\nabla I_1( I_1 - I_0\circ\psi) \det{\nabla\psi}$. Now
consider the regularity term
$U_2(\phi) = \int_{\R^n} |\nabla (\phi(x)-x)|^2 \ud x$, then
\[
  \delta U(\phi) = 2\int_{\R^n} \tr{ \nabla (\phi(x)-\mbox{id})^T \nabla
                   \widehat{\delta\phi}(x)    } \ud x =
                 -\int_{\R^n} \Delta \phi(x)^T
    \delta\phi(x) \ud x =
    \int_{\Omega} (\Delta \phi)(\psi(x))^T  \delta \phi(x)
    \det{ \nabla\psi(x) }\ud x.
\]
Note that in integration by parts, the boundary term vanishes since we
assume that $\phi(x) = x$ as $|x|\to \infty$. Thus, $\nabla U_2 =
(\Delta \phi) \circ \psi  \det{ \nabla\psi }$.

\subsection{Stationary Conditions}
\label{app:stat_cond_nondissip}

\begin{lemma}[Stationary Condition for the Mapping]
  \label{eq:stationary_mapping}
  The stationary condition of the action \eqref{eq:action_lagrange}
  for the mapping is
  \begin{equation}
    \partial_t \lambda + \dv{v\lambda^T}^T = (\nabla\psi)^{-1} \nabla U(\phi).
  \end{equation}
\end{lemma}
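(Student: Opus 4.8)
The plan is to obtain the stated Euler--Lagrange equation by setting $\delta A=0$ for the action \eqref{eq:action_lagrange}, where the variation is taken with respect to the inverse mapping $\psi$ alone (holding $v$, $\rho$, $\mu$, $\lambda$ fixed), and then reading off the coefficient of an arbitrary variation $\delta\psi$. First I would isolate the two terms of \eqref{eq:action_lagrange} that actually depend on $\psi$: the potential term $-\int U(\phi)\,\ud t$ (since $\phi=\psi^{-1}$ couples $U$ to $\psi$) and the constraint term $\int\int_{\R^n}\lambda^T[\partial_t\psi+(D\psi)v]\,\ud x\,\ud t$. The kinetic energy $T(v)$ and the continuity-constraint term carrying $\mu$ and $\rho$ contain no $\psi$ and drop out of this variation.

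The constraint term is the routine part. Writing $\psi\mapsto\psi+\varepsilon\,\delta\psi$ and differentiating at $\varepsilon=0$ gives $\int\int\lambda^T[\partial_t\delta\psi+(D\delta\psi)v]$. I would then integrate by parts once in $t$ and once in each spatial coordinate to transfer all derivatives onto $\lambda$ and $v$, discarding boundary terms by the fixed-endpoint condition in $t$ and the compact-support/decay assumption in $x$. In index form this produces the coefficient $-[\partial_t\lambda^i+\sum_j\partial_{x_j}(v^j\lambda^i)]$ of $\delta\psi^i$; recognizing $\sum_j\partial_{x_j}(v^j\lambda^i)$ as the $i$-th component of $\dv{v\lambda^T}$, and expanding it as $\lambda\,\dv{v}+(D\lambda)v$ to cross-check against Theorem~\ref{thrm:stationary_lagrange_mult}, assembles the left-hand side $\partial_t\lambda+\dv{v\lambda^T}^T$.

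The potential term is where the real work lies. The functional gradient $\nabla U(\phi)$ is defined (Appendix~\ref{app:funct_grads}) against an \emph{Eulerian} perturbation $v$ of $\phi$, namely $(\phi+\varepsilon v)(x)=\phi(x)+\varepsilon v(\phi(x))$, whereas we are perturbing the inverse $\psi$ additively; so the key step is to relate $\delta\psi$ to the corresponding $v$. I would differentiate the identity $\phi_\varepsilon\circ\psi_\varepsilon=\mathrm{id}$ at $\varepsilon=0$ and use $D\phi(\psi(x))=[D\psi(x)]^{-1}$ to obtain the pointwise relation $\delta\psi=-(\nabla\psi)^T v$, equivalently $v=-(\nabla\psi)^{-T}\delta\psi$. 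Substituting into $\delta U\cdot v=\int_{\phi(\R^n)}\nabla U(\phi)\cdot v\,\ud x$ and moving the matrix across the inner product via $a\cdot(M^{-T}b)=(M^{-1}a)\cdot b$ yields the coefficient $(\nabla\psi)^{-1}\nabla U(\phi)$ of $\delta\psi$, the overall sign flipping once more because the action carries $-U$. A convenient simplification is that $\psi$, $v$, and $\nabla U(\phi)$ all live on the common domain $\phi(\R^n)=\R^n$, so no change of variables or Jacobian weight intervenes.

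Finally, I would add the two contributions, set the total coefficient of the arbitrary $\delta\psi$ to zero, and rearrange into the stated identity $\partial_t\lambda+\dv{v\lambda^T}^T=(\nabla\psi)^{-1}\nabla U(\phi)$. The main obstacle is the third step: getting the transpose and Jacobian bookkeeping exactly right in the relation between $\delta\psi$ and the Eulerian $v$, since the functional gradient is defined through the tangent-space (Eulerian) perturbation of $\phi$ rather than an additive perturbation of $\psi$. Once that relation is pinned down, the remainder is only integration by parts.
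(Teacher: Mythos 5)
Your proposal is correct and follows essentially the same route as the paper: isolate the potential term and the mapping-constraint term, integrate by parts to move derivatives onto $\lambda$ (yielding $-[\partial_t\lambda + \dv{v\lambda^T}^T]$), and differentiate the inverse relation $\phi\circ\psi=\mathrm{id}$ to connect the perturbation of $\psi$ to the Eulerian perturbation of $\phi$ through $-(D\psi)$. The only cosmetic difference is that you collect the coefficient of the additive $\delta\psi$ whereas the paper substitutes $\widehat{\delta\psi}=-[D\psi]\,\delta\phi$ and collects the coefficient of $\delta\phi$; since these perturbations are related by the invertible matrix $-(D\psi)$, the resulting stationarity conditions coincide, and your transpose bookkeeping ($\nabla\psi=(D\psi)^T$, $a\cdot(M^{-T}b)=(M^{-1}a)\cdot b$) is consistent with the paper's conventions.
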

\begin{proof}
  We compute the variation of $A$ (defined in
  \eqref{eq:action_lagrange}) with respect to the mapping $\phi$. The
  only terms in the action that depend on the mapping are $U$ and the
  Lagrange multiplier term associated with the mapping. Taking the
  variation w.r.t the potenial term gives
  \[
    -\int \int_{ \phi(\R^n) } \nabla U(\phi) \cdot \delta \phi \ud x
    \ud t.
  \]
  Now the variation with respect to the Lagrange multiplier term:
  \[
    \int\int_{ \phi(\R^n) } \lambda^T[ \partial_t \widehat{\delta \psi} + D
    (\widehat{\delta\psi}) v ] \ud x \ud t
    = -\int \int_{ \phi(\R^n) } [ \partial_t \lambda^T +
    \dv{v\lambda^T} ] \widehat{\delta\psi} \ud x \ud t,
  \]
  where we have integrated by parts, the $\dv{\cdot}$ of a matrix means
  the divergence of each of the columns, resulting in a row
  vector, and $\widehat{\delta \psi} = \delta \psi \circ\psi$. Note
  that we can take the variation of $\psi(\phi(x)) = x$ to obtain
  \[
    \widehat{\delta\psi} \circ \phi(x) + [D\psi(\phi(x))] \widehat{\delta\phi}(x) = 0,
  \]
  or 
  \[
    \widehat{\delta\psi}(y) = -[D\psi(y)] \delta\phi(y).
  \]
  Therefore,
  \begin{equation} \label{eq:der_action_mapping}
    \delta A \cdot \delta\phi = 
    \int\int_{ \phi(\R^n) } \left\{ (\nabla \psi) \left[  \partial_t \lambda +
        \dv{v\lambda^T}^T \right] - \nabla U(\phi) \right\} \cdot \delta \phi
    \ud x \ud t.
  \end{equation}
\end{proof}

\begin{lemma}[Stationary Condition for the Velocity]
  The stationary condition of the action \eqref{eq:action_lagrange}
  arising from the velocity is
  \begin{equation}
    \rho v + (\nabla\psi)\lambda - \rho\nabla\mu = 0.
  \end{equation}
\end{lemma}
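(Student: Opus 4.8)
The plan is to compute the first variation $\delta A\cdot\delta v$ of the action \eqref{eq:action_lagrange} directly and then invoke the fundamental lemma of the calculus of variations. The crucial structural observation is that, once the mapping constraint \eqref{eq:transport_backward} and the continuity equation \eqref{eq:continuity_eqn} have been adjoined through the multipliers $\lambda$ and $\mu$, the fields $v,\psi,\rho,\lambda,\mu$ are treated as mutually independent. Hence a perturbation $v\mapsto v+\varepsilon\,\delta v$ acts only on the explicit occurrences of $v$ in $A$, while all coupling between $v$ and $(\psi,\rho)$ is carried entirely by the multiplier terms. Consequently no chain-rule contributions through $\psi$ or $\rho$ appear, which is exactly the simplification the Lagrange-multiplier formulation is designed to produce.

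First I would isolate the three terms of \eqref{eq:action_lagrange} containing $v$: the kinetic energy $T(v)=\int_{\phi(\R^n)}\frac12\rho|v|^2\ud x$, the mapping-constraint term through the factor $(D\psi)v$, and the continuity-equation term through the factor $\nabla\mu\cdot v$; the potential (now a function of $\psi$) is independent of $v$ and drops out. Using $\phi(\R^n)=\R^n$ and the fact that $\rho,v$ are Eulerian fields on that range, the kinetic term contributes $\int\int_{\R^n}\rho\, v\cdot\delta v\ud x\ud t$. The continuity-equation term contributes $-\int\int_{\R^n}\rho\,\nabla\mu\cdot\delta v\ud x\ud t$ immediately, since $v$ enters it linearly and no derivatives fall on $\delta v$.

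The one step requiring care is the mapping term $\int\int_{\R^n}\lambda^T(D\psi)\,\delta v\ud x\ud t$. Here I would recast the integrand as an inner product against $\delta v$ by transposing, $\lambda^T(D\psi)\delta v=[(D\psi)^T\lambda]\cdot\delta v=[(\nabla\psi)\lambda]\cdot\delta v$, using the convention $\nabla\psi=(D\psi)^T$ already fixed in the paper. The main obstacle is precisely keeping this Jacobian/gradient transpose bookkeeping consistent, together with confirming that this term carries no spatial derivative on $\delta v$, so that, unlike the variation with respect to the mapping in Lemma~\ref{eq:stationary_mapping}, no integration by parts is needed and hence no boundary terms arise.

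Collecting the three contributions gives $\delta A\cdot\delta v=\int\int_{\R^n}\big[\rho v+(\nabla\psi)\lambda-\rho\nabla\mu\big]\cdot\delta v\ud x\ud t$. Since $\delta v$ ranges over all admissible smooth velocity perturbations and the bracketed field is continuous, the fundamental lemma of the calculus of variations forces the bracket to vanish pointwise, yielding $\rho v+(\nabla\psi)\lambda-\rho\nabla\mu=0$, as claimed.
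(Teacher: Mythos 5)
Your proposal is correct and follows essentially the same route as the paper's proof: isolating the explicit occurrences of $v$ in the kinetic term, the mapping-constraint term, and the continuity-equation term, transposing to write $\lambda^T(D\psi)\,\delta v = [(\nabla\psi)\lambda]\cdot\delta v$, and collecting to obtain the variation \eqref{eq:var_action_vel}. Your additional remarks --- that the multiplier formulation makes $v,\psi,\rho$ independent under variation, that no integration by parts is needed, and the explicit appeal to the fundamental lemma --- merely make precise what the paper leaves implicit.
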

\begin{proof}
  We compute the variation w.r.t the kinetic energy:
  \[
    \delta T \cdot \delta v = \int_{\phi(\R^n)} \rho v\cdot \delta v
    \ud x. 
  \]
  The variation of the Lagrange multiplier terms is 
  \[
    \int\int_{\phi(\R^n)} \lambda^T (D\psi)  \delta v - \rho \nabla\mu
    \cdot \delta v \ud x \ud t = 
    \int\int_{\phi(\R^n)} [ (\nabla\psi)\lambda - \rho\nabla\mu ]
    \cdot \delta v \ud x \ud t.
  \]
  Therefore,
  \begin{equation} \label{eq:var_action_vel}
    \delta A \cdot \delta v = 
    \int\int_{\phi(\R^n)} [ \rho v + (\nabla\psi)\lambda - \rho\nabla\mu ]
    \cdot \delta v \ud x \ud t.
  \end{equation}
\end{proof}

\begin{lemma}[Stationary Condition for the Density]
  The stationary condition of the action \eqref{eq:action_lagrange}
  arising from the velocity is
  \begin{equation}
    \partial_t \mu + (D\mu)v = \frac 1 2 |v|^2.
  \end{equation}
\end{lemma}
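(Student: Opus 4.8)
The plan is to compute the first variation of the action \eqref{eq:action_lagrange} with respect to the density $\rho$, holding the velocity $v$, the inverse mapping $\psi$, and both Lagrange multipliers fixed, and then to invoke the fundamental lemma of the calculus of variations. First I would isolate the $\rho$-dependent terms in $A$. The potential $U(\phi)$ and the mapping Lagrange multiplier term $\int\int_{\R^n}\lambda^T[\partial_t\psi + (D\psi)v]\ud x\ud t$ carry no dependence on $\rho$, so only two pieces contribute: the kinetic energy $T(v) = \int_{\phi(\R^n)}\frac 1 2\rho|v|^2\ud x$ (recalling that $\phi(\R^n)=\R^n$ since $\phi$ is a diffeomorphism), and the weak form of the continuity constraint $-\int\int_{\R^n}[\partial_t\mu + \nabla\mu\cdot v]\rho\ud x\ud t$.

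Both of these pieces are \emph{linear} in $\rho$, and this is precisely the payoff of having integrated the continuity constraint by parts beforehand, which removed all derivatives from $\rho$; the variations are therefore immediate. Perturbing $\rho\mapsto\rho+\varepsilon\,\delta\rho$ and differentiating at $\varepsilon=0$, the kinetic term contributes $\int\int_{\R^n}\frac 1 2|v|^2\,\delta\rho\ud x\ud t$, while the constraint term contributes $-\int\int_{\R^n}[\partial_t\mu + \nabla\mu\cdot v]\,\delta\rho\ud x\ud t$. Collecting these gives
\begin{equation}
  \delta A\cdot\delta\rho = \int\int_{\R^n}\left[\frac 1 2|v|^2 - \partial_t\mu - \nabla\mu\cdot v\right]\delta\rho\ud x\ud t.
\end{equation}

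Setting $\delta A\cdot\delta\rho = 0$ for every admissible perturbation $\delta\rho$ and applying the fundamental lemma forces the bracketed integrand to vanish pointwise, yielding $\partial_t\mu + \nabla\mu\cdot v = \frac 1 2|v|^2$; identifying $\nabla\mu\cdot v = (D\mu)v$ for the scalar field $\mu$ then gives the stated condition. The only points needing care are minor: I would check that $\delta\rho$ is unconstrained in the interior (the endpoint values $\rho_0$ and $\rho_\infty$ are fixed, so $\delta\rho$ vanishes there, consistent with the integration-by-parts convention already adopted in setting up \eqref{eq:action_lagrange}), and confirm that no spatial boundary contributions survive, which holds because the mass density is assumed to die down to zero outside a compact set. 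I expect no genuine obstacle here: the weak formulation was arranged precisely so that $\rho$ enters algebraically rather than through its derivatives, so the stationarity condition follows directly from the fundamental lemma.
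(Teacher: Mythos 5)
Your proposal is correct and matches the paper's own proof, which likewise observes that only the kinetic energy and the continuity-constraint term depend on $\rho$, computes the (linear) variation $\delta A\cdot\delta\rho = \int\int \left[\frac 1 2 |v|^2 - \partial_t\mu - \nabla\mu\cdot v\right]\delta\rho\,\ud x\,\ud t$, and concludes by the fundamental lemma. Your additional remarks on the endpoint and spatial boundary behavior of $\delta\rho$ are consistent with the conventions the paper adopts when setting up \eqref{eq:action_lagrange}.
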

\begin{proof}
  Note that the terms that contain the density in
  \eqref{eq:action_lagrange} are the kinetic energy and the Lagrange
  multiplier corresponding to the density. We see that
  \begin{equation} \label{eq:var_action_rho}
    \delta A \cdot \delta\rho = \int\int_{\phi(\R^n)} \frac 1 2 |v|^2
    \delta\rho - (\partial_t\mu + \nabla\mu\cdot v) \delta \rho \ud
    x\ud t,
  \end{equation}
  which yields the lemma.
\end{proof}

\subsection{Velocity Evolution}
\label{app:velocity_evol}

\begin{lemma}\label{lem:lambda_t_w_t}
  Given that $(\nabla\psi)\lambda = w$, we have that
  \begin{equation} 
    \partial_t\lambda +  (D\lambda)v + \lambda \dv{v} = 
    (\nabla\psi)^{-1} [ \partial_t w + (Dw)v + (\nabla v)w + w\dv{v} ]    
  \end{equation}
\end{lemma}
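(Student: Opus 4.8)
The plan is to work from the transport equation \eqref{eq:transport_backward} for $\psi$ and to treat the whole identity as a statement about the matrix field $\nabla\psi$. Since $\nabla\psi$ is invertible ($\psi$ being a diffeomorphism), the claimed identity is equivalent to
\[
\partial_t w + (Dw)v + (\nabla v)w + w\dv{v} = (\nabla\psi)\left[ \partial_t\lambda + (D\lambda)v + \lambda\dv{v}\right],
\]
so I would establish this form and then left-multiply by $(\nabla\psi)^{-1}$.

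First I would derive an evolution equation for the matrix $A := \nabla\psi$. Writing \eqref{eq:transport_backward} componentwise as $\partial_t\psi^j + \sum_k (\partial_k\psi^j)v^k = 0$ and differentiating with respect to $x_i$, the mixed second partials commute and the product rule produces exactly three groups of terms: the time derivative $\partial_t A_{ij}$, a convective term $\sum_k v^k\partial_k A_{ij}$, and a term $\sum_k A_{kj}\partial_i v^k$. Identifying $\partial_i v^k$ with the entries of $\nabla v$ (here the transpose convention $\nabla = D^{T}$ matters), this last group is precisely $((\nabla v)A)_{ij}$, yielding the compact matrix transport equation
\[
\partial_t(\nabla\psi) + (v\cdot\nabla)(\nabla\psi) + (\nabla v)(\nabla\psi) = 0,
\]
where $(v\cdot\nabla)$ denotes the convective derivative applied entrywise.

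Next I would substitute $w = (\nabla\psi)\lambda = A\lambda$ into the left-hand side and expand each term by the product rule: $\partial_t w = (\partial_t A)\lambda + A\partial_t\lambda$; the Jacobian term splits as $(Dw)v = ((v\cdot\nabla)A)\lambda + A(D\lambda)v$; while $(\nabla v)w = (\nabla v)A\lambda$ and $w\dv{v} = A\lambda\dv{v}$ are immediate. Collecting the terms that multiply $\lambda$ (as opposed to its derivatives), their coefficient is exactly $\partial_t A + (v\cdot\nabla)A + (\nabla v)A$, which vanishes by the transport equation for $\nabla\psi$ derived above. The surviving terms are $A\partial_t\lambda + A(D\lambda)v + A\lambda\dv{v}$, which factor as $A\left[\partial_t\lambda + (D\lambda)v + \lambda\dv{v}\right]$, establishing the equivalent form and hence the lemma.

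I expect the main obstacle to be the first step, deriving the matrix transport equation for $\nabla\psi$, and in particular the careful index bookkeeping that produces the $(\nabla v)(\nabla\psi)$ term with the correct (transpose) placement; this term is exactly what matches $(\nabla v)w$ on the right, so getting its form right is what makes the two sides agree. The remaining expansion is routine product-rule algebra, with the only delicate point being that $(Dw)v$ splits cleanly into a piece carrying the spatial derivative onto $A$ (feeding the cancelling coefficient of $\lambda$) and a piece carrying it onto $\lambda$ (feeding the final factorization).
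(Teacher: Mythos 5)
Your proof is correct, and it takes a genuinely different route from the paper's. The paper never writes down an evolution equation for $\nabla\psi$ as such; instead it differentiates the constraint $(\nabla\psi)\lambda = w$ separately in space and in time, solves the resulting identities individually for $(D\lambda)v$ and for $\partial_t\lambda$ (introducing the Hessian tensor $D^2\psi$ explicitly and substituting $\partial_t\psi = -(D\psi)v$ along the way), and then relies on the cancellation of the two $D^2\psi(\cdot,v)\cdot\lambda$ terms when the pieces are summed, finishing with the trivial observation $\lambda\dv{v} = (\nabla\psi)^{-1}w\dv{v}$. You instead extract one standalone key lemma, the matrix transport equation
\[
\partial_t(\nabla\psi) + (v\cdot\nabla)(\nabla\psi) + (\nabla v)(\nabla\psi) = 0,
\]
which is correct under the paper's convention $\nabla\psi = (D\psi)^T$: differentiating $\partial_t\psi^j + \sum_k (\partial_{x_k}\psi^j)v^k = 0$ in $x_i$ gives $\partial_t\partial_i\psi^j + \sum_k v^k\partial_k\partial_i\psi^j + \sum_k(\partial_i v^k)\partial_k\psi^j = 0$, and the last sum is indeed $((\nabla v)(\nabla\psi))_{ij}$. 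With that in hand, your product-rule expansion of $\partial_t w + (Dw)v + (\nabla v)w + w\dv{v}$ for $w=(\nabla\psi)\lambda$ kills the entire coefficient of $\lambda$ in one stroke, leaving $(\nabla\psi)\bigl[\partial_t\lambda + (D\lambda)v + \lambda\dv{v}\bigr]$, and invertibility of $\nabla\psi$ finishes it. The underlying calculus is the same --- your convective term $((v\cdot\nabla)\nabla\psi)\lambda$ is exactly the paper's pair of cancelling Hessian terms in disguise --- but your packaging buys real clarity: it avoids solving for $D\lambda$ and $\partial_t\lambda$ midstream through $(\nabla\psi)^{-1}$, dispenses with the $D^2\psi(\cdot,\cdot)\cdot\lambda$ notation altogether, and exposes the structural reason the identity holds, namely that $\nabla\psi$ is advected with source $-(\nabla v)(\nabla\psi)$, whose source term is precisely what produces $(\nabla v)w$ on the right-hand side. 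The two delicate points you flag --- the transpose placement in $(\nabla v)(\nabla\psi)$ and the clean split $(Dw)v = ((v\cdot\nabla)\nabla\psi)\lambda + (\nabla\psi)(D\lambda)v$ --- both check out under the paper's conventions, so the argument is complete as outlined.
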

\begin{proof}
  Define the Hessian as follows:
  \[
    [ D^2\psi ]_{ijk} = \partial_{x_ix_j}^2 \psi^k, \quad 
    [ D^2\psi(a,b) ]_k =  \sum_{ij} \partial_{x_ix_j}^2 \psi^k a_i b_j.
  \]
  We compute 
  \[
    \{ D[ (\nabla\psi)\lambda ] \}_{ ij  } = 
    \partial_{ x_j  } [ (\nabla\psi)\lambda ]_{i}  = 
    \partial_{ x_j  } \sum_{ l } \partial_{ x_i } \psi^l \lambda_l = 
    \sum_l (\partial^2_{ x_j x_i } \psi^l \lambda_l) + \partial_{ x_i }
    \psi^l\partial_{ x_j } \lambda_l.
  \]
  Therefore,
  \[
    D[ (\nabla\psi)\lambda ]= D^2\psi(\cdot, \cdot) \cdot \lambda + (\nabla\psi)  (D\lambda)
  \]
  Since $ D[ (\nabla\psi)\lambda ] = Dw $ then solving for $D\lambda$ gives
  \[
    D\lambda =(\nabla\psi)^{-1} [ Dw - D^2\psi(\cdot, \cdot) \cdot
    \lambda ],
  \]
  so 
  \begin{equation} \label{eq:Dlam_v}
    (D\lambda)v = 
    (\nabla\psi)^{-1} [ (Dw)v - D^2\psi(\cdot, v) \cdot
    \lambda ].
  \end{equation}

  Now differentiating $(\nabla\psi)\lambda = w$ w.r.t $t$, we have
  \[
    (\nabla \partial_t\psi)\lambda + (\nabla\psi)\partial_t \lambda
    = \partial_t w, \quad \mbox{or} \quad
    \partial_t \lambda = (\nabla \psi)^{-1} [ \partial_t w - (\nabla \partial_t\psi)\lambda] 
  \]
  Note that $\partial_t\psi = -(D\psi)v$ so 
  \begin{equation} \label{eq:lambda_t_w}
    \partial_t \lambda = (\nabla \psi)^{-1} \left\{ \partial_t w +
      \nabla [ (D\psi)v] \lambda\right\}.
  \end{equation}
  Now computing $\nabla [ (D\psi)v]$ yields
  \[
    \{ \nabla [ (D\psi)v ) ]  \}_{lk} = 
  \partial_{ x_l } \sum_{ i } \partial_{ x_i } \psi^k v^i = 
  \sum_{i} \partial_{x_l}\partial_{x_i} \psi^k v^i + 
  \partial_{x_i} \psi^k \partial_{x_l}  v^i.
  \]
  Then multiplying the above matrix by $\lambda$ gives
  \[
    \{ \nabla [ (D\psi)v ) ] \lambda \}_{l} = \sum_{ik} 
    \partial_{x_l}\partial_{x_i} \psi^k v^i \lambda^k + 
    \partial_{x_i} \psi^k \partial_{x_l}  v^i \lambda^k,
  \]
  which in matrix form is
  \[
    \nabla [ (D\psi)v ) ] \lambda = 
    D^2\psi( \cdot, v )\cdot \lambda + (\nabla v)(\nabla\psi)\lambda = 
    D^2\psi( \cdot, v )\cdot \lambda + (\nabla v)w
  \]
  Therefore, \eqref{eq:lambda_t_w} becomes
  \[
    \partial_t\lambda = (\nabla\psi)^{-1} [ 
    \partial_t w + D^2\psi( \cdot, v )\cdot \lambda + (\nabla v)w
    ].
  \]
  Combining the previous with \eqref{eq:Dlam_v} and noting that
  $\lambda \dv{v} = (\nabla\psi)^{-1}w \dv{v}$ yields
  \[
    \partial_t\lambda +  (D\lambda)v + \lambda \dv{v} = 
    (\nabla\psi)^{-1} [ \partial_t w + (Dw)v + (\nabla v)w + w\dv{v} ].    
  \]
\end{proof}

\begin{lemma} \label{lem:w_t_v_t}
  If $w = \rho( \nabla\mu - v )$, then 
  \begin{equation}
    \partial_t w + (Dw)v + (\nabla v)w + w\dv{v} = -\rho[ \partial_t v
    + (Dv)v ].
  \end{equation}
\end{lemma}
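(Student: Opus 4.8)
The plan is to expand the left-hand side after substituting the hypothesis $w = \rho(\nabla\mu - v)$, and to drive everything back to two facts already available: the continuity equation \eqref{eq:continuity_eqn} for $\rho$, and the density stationary condition $\partial_t\mu + \nabla\mu\cdot v = \frac{1}{2}|v|^2$ from Theorem~\ref{thrm:stationary_lagrange_mult}. Set $u = \nabla\mu$ and $q = u - v$, so that $w = \rho q$. First I would record a ``conservation form'' identity valid for \emph{any} vector field $q$: expanding $\partial_t(\rho q) + D(\rho q)v + (\rho q)\dv{v}$ by the product rule, the terms carrying a bare factor of $q$ assemble into $q\,[\partial_t\rho + \nabla\rho\cdot v + \rho\,\dv{v}] = q\,[\partial_t\rho + \dv{\rho v}]$, which vanishes identically by the continuity equation. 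Hence $\partial_t(\rho q) + D(\rho q)v + (\rho q)\dv{v} = \rho\,[\partial_t q + (Dq)v]$, which accounts for three of the four terms on the left-hand side of the lemma.

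The remaining term is $(\nabla v)w = \rho(\nabla v)(u-v)$, the one piece not absorbed by the conservation form. Substituting $q = u - v$ and collecting, the left-hand side equals $\rho\,[\partial_t u + (Du)v + (\nabla v)u] - \rho\,[\partial_t v + (Dv)v + (\nabla v)v]$. Matching this against the target $-\rho\,[\partial_t v + (Dv)v]$ and cancelling the common $-\rho\,\partial_t v$ and $-\rho(Dv)v$ contributions, the claim reduces to the purely spatial identity $\partial_t u + (Du)v + (\nabla v)u = (\nabla v)v$.

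To close, I would differentiate the density stationary condition $\partial_t\mu + \nabla\mu\cdot v = \frac{1}{2}|v|^2$ in space. The left two terms give $\partial_t\nabla\mu = \partial_t u$ and $\nabla(\nabla\mu\cdot v)$, while the right side gives $\frac{1}{2}\nabla|v|^2 = (\nabla v)v$. A product-rule expansion of $\nabla(\nabla\mu\cdot v)$ yields precisely $(Du)v + (\nabla v)u$, which reproduces the reduced identity. The one step needing care, and the main (if modest) obstacle, is the Hessian contraction: the factor that appears is $\partial_{x_l}\partial_{x_i}\mu\,v^i$, which equals $[(Du)v]_l$ only because $u = \nabla\mu$ is a gradient, so $Du$ is the symmetric Hessian of $\mu$ and coincides with $\nabla u = (Du)^T$. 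Keeping the two index conventions straight throughout ($Dv$ versus $\nabla v = (Dv)^T$, and $(\nabla v)v = \frac{1}{2}\nabla|v|^2$) is the only genuine bookkeeping hazard; the rest is routine differentiation.
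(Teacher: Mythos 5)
Your proposal is correct and follows essentially the same route as the paper's proof: both expand $\partial_t w + (Dw)v + w\,\dv{v}$ so that the continuity equation $\partial_t\rho + \dv{\rho v} = 0$ annihilates the terms carrying a bare factor of $\nabla\mu - v$, and both then cancel the residual $(\nabla v)v$ by taking the spatial gradient of the density stationary condition $\partial_t\mu + \nabla\mu\cdot v = \frac{1}{2}|v|^2$ (the Hessian symmetry you flag is used implicitly in the paper when it writes $\nabla\partial_t\mu + D(\nabla\mu)v + (\nabla v)\nabla\mu = \nabla[\partial_t\mu + (D\mu)v]$). Your packaging of the first step as a standalone conservation-form identity for $\rho q$ is a tidier modularization of the same computation, not a different argument.
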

\begin{proof}
  Differentiating $w = \rho( \nabla\mu - v )$, we have
  \begin{align*}
    \partial_t w &= (\partial_t\rho)(\nabla\mu - v) + \rho(
                   \nabla\partial_t\mu - \partial_t v) \\
    Dw &= (\nabla\mu - v)(D\rho) + \rho[ D(\nabla\mu) - Dv ].
  \end{align*}
  Therefore,
  \begin{align*}
    \partial_t w + (Dw)v + (\nabla v)w + w\dv{v} &= 
    (\nabla\mu - v)( \partial_t\rho +\nabla\rho\cdot v ) + 
    \rho[ \nabla\partial_t\mu - \partial_t v + D(\nabla\mu)v - (Dv)v
    ] \\
    &+ 
    \rho(\nabla v)(\nabla\mu - v) + \rho(\nabla\mu - v)\dv{v} \\
    &= 
      (\nabla\mu - v)( \partial_t\rho +\nabla\rho\cdot v + \rho\dv{v}
      ) \\
    &+ 
    \rho[ \nabla\partial_t\mu - \partial_t v + D(\nabla\mu)v - (Dv)v
      + (\nabla v)(\nabla\mu - v)
    ].
  \end{align*}
  Note that $\partial_t\rho +\nabla\rho\cdot v +
  \rho\dv{v} = \partial_t \rho + \dv{\rho v} = 0$, due to the
  continuity equation. Therefore,
  \begin{align*}
    \partial_t w + (Dw)v + (\nabla v)w + w\dv{v} &=
    \rho[ -\partial_t v - (Dv)v - (\nabla v)v + 
    \nabla\partial_t\mu + D(\nabla\mu)v + (\nabla v)(\nabla\mu) ] \\
    &=
      \rho\left\{ -\partial_t v - (Dv)v - (\nabla v)v + 
      \nabla[  \partial_t\mu + (D\mu)v ] \right\}.
  \end{align*}
  By the stationary condition for the density, $\partial_t\mu +
  (D\mu)v = 1/2 |v|^2$, so $\nabla[ \partial_t\mu +
  (D\mu)v ] = (\nabla v)v$, which gives the lemma.
\end{proof}

\begin{theorem}[Velocity Evolution]
  The evolution equation for the velocity arising from the
  stationarity of the action integral is 
  \begin{equation}
    \rho[ \partial_t v + (Dv)v ] = -\nabla U(\phi).
  \end{equation}
\end{theorem}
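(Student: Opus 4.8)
The plan is to eliminate the two Lagrange multipliers $\lambda$ and $\mu$ from the three stationary conditions of Theorem~\ref{thrm:stationary_lagrange_mult} so as to obtain a single closed forward equation for the velocity $v$ (coupled to $\rho$ and $\phi$, whose own evolutions are already fixed by the constraints). The two preparatory lemmas, Lemma~\ref{lem:lambda_t_w_t} and Lemma~\ref{lem:w_t_v_t}, are engineered precisely to perform this elimination, and the whole argument amounts to chaining them together through the auxiliary vector field $w := (\nabla\psi)\lambda$.

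The observation that makes this work is that this single field $w$ simultaneously satisfies both lemma hypotheses. By definition $w = (\nabla\psi)\lambda$, which is exactly the hypothesis of Lemma~\ref{lem:lambda_t_w_t}. At the same time, the velocity stationary condition $\rho v + (\nabla\psi)\lambda - \rho\nabla\mu = 0$ can be solved for $(\nabla\psi)\lambda$ to give $w = \rho(\nabla\mu - v)$, which is exactly the hypothesis of Lemma~\ref{lem:w_t_v_t}. Thus the same $w$ feeds both lemmas, and the velocity stationary condition is the bridge that joins them.

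With this in hand I would carry out the following steps in order. First, start from the mapping stationary condition $\partial_t\lambda + (D\lambda)v + \lambda\dv{v} = (\nabla\psi)^{-1}\nabla U(\phi)$. Second, apply Lemma~\ref{lem:lambda_t_w_t} to rewrite the left-hand side entirely in terms of $w$, yielding $(\nabla\psi)^{-1}[\partial_t w + (Dw)v + (\nabla v)w + w\dv{v}] = (\nabla\psi)^{-1}\nabla U(\phi)$. Third, premultiply both sides by $\nabla\psi$, which is invertible because $\psi$ is a diffeomorphism, to clear the inverse and obtain $\partial_t w + (Dw)v + (\nabla v)w + w\dv{v} = \nabla U(\phi)$. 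Fourth, invoke Lemma~\ref{lem:w_t_v_t} to replace the entire left-hand side by $-\rho[\partial_t v + (Dv)v]$. Rearranging the resulting equality gives exactly $\rho[\partial_t v + (Dv)v] = -\nabla U(\phi)$.

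The genuine work is hidden inside the two lemmas rather than in this final assembly, so at the level of this theorem the only real care needed is the simultaneous identification of $w$ through the velocity condition and the invertibility of $\nabla\psi$. It is worth stressing, however, that all three stationary conditions and the continuity constraint are genuinely consumed: the density condition $\partial_t\mu + (D\mu)v = \frac12|v|^2$ and the continuity equation are both used \emph{inside} Lemma~\ref{lem:w_t_v_t} to cancel the gradient-of-$\mu$ terms and the density-derivative terms, so that $w = \rho(\nabla\mu - v)$ collapses cleanly onto $-\rho[\partial_t v + (Dv)v]$. The main obstacle, were those lemmas not already established, would be verifying these cancellations, in particular the identity $\nabla[\partial_t\mu + (D\mu)v] = (\nabla v)v$ that converts the potential-free part of the dynamics into the advective term $(Dv)v$.
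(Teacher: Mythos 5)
Your proposal is correct and follows essentially the same route as the paper: the paper's own proof is the one-line remark that the theorem is a combination of Lemmas~\ref{eq:stationary_mapping}, \ref{lem:lambda_t_w_t}, and \ref{lem:w_t_v_t}, and your argument --- identifying the single field $w = (\nabla\psi)\lambda = \rho(\nabla\mu - v)$ via the velocity stationary condition, rewriting the mapping condition through Lemma~\ref{lem:lambda_t_w_t}, clearing the invertible factor $(\nabla\psi)^{-1}$, and collapsing the result with Lemma~\ref{lem:w_t_v_t} --- is exactly that combination made explicit. Your accounting of where the density stationary condition and the continuity equation are consumed (inside Lemma~\ref{lem:w_t_v_t}) and the sign bookkeeping are both correct.
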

\begin{proof}
  This is a combination of  Lemmas~\ref{eq:stationary_mapping},
 \ref{lem:lambda_t_w_t}, and \ref{lem:w_t_v_t}.  
\end{proof}

\subsection{Stationary Conditions for the Dissipative Case}

\label{app:stationary_dissip}

\begin{theorem}[Stationary Conditions for the Path of Least Action:
  Dissipative Case] \label{thrm:stationary_lagrange_mult_dissip}
  The stationary conditions of the path for the action
  \begin{align}
    \label{eq:action_dissip_lagrange}
  A &= \int \left[ a T(v) - b U(\phi) \right] \ud t
      + \int \int_{\R^n} \lambda^T[ \partial_t \psi_t + (D\psi)v ] \ud
      x \ud t
      -\int \int_{\R^n} \left[ \partial_t \mu + \nabla \mu \cdot
      v \right] \rho \ud x \ud t,
  \end{align}
  are
  \begin{align}
    \partial_t \lambda + (D\lambda )v + \lambda \dv{v}  &= b(\nabla\psi)^{-1}\nabla
                                                          U(\phi) \\
    a\rho v + (\nabla \psi) \lambda - \rho\nabla\mu &= 0 \\
    \partial_t \mu + \nabla\mu \cdot v &= \frac 1 2 a|v|^2.
  \end{align}
\end{theorem}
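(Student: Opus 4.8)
The plan is to establish stationarity of \eqref{eq:action_dissip_lagrange} by the same route as the non-dissipative Theorem~\ref{thrm:stationary_lagrange_mult}, namely by computing independent variations of $A$ with respect to the three fields $\rho$, $v$, and $\phi$ (the latter through its inverse $\psi$) and setting each to zero for arbitrary perturbations. The key structural observation, which I would record at the outset, is that $a_t$ and $b_t$ depend on time alone; inside every spatial integral they therefore factor out as constants and simply ride along with the term they multiply, never interacting with the spatial integration by parts that generates the Lagrange-multiplier contributions. This reduces the whole derivation to re-running the three lemmas of Appendix~\ref{app:stat_cond_nondissip} while carrying the scalar weights through.

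First I would vary with respect to the density $\rho$. Only the kinetic energy $a T(v) = \int_{\phi(\R^n)} \frac12 a\rho|v|^2 \ud x$ and the density multiplier term depend on $\rho$, so $\delta A\cdot\delta\rho = \int\int_{\phi(\R^n)} [\frac12 a|v|^2 - (\partial_t\mu + \nabla\mu\cdot v)]\delta\rho\ud x\ud t$, and arbitrariness of $\delta\rho$ gives the third equation $\partial_t\mu + \nabla\mu\cdot v = \frac12 a|v|^2$. Next I would vary with respect to $v$, whose contributions are $\delta(aT)\cdot\delta v = \int a\rho v\cdot\delta v$, the mapping constraint $\int \lambda^T(D\psi)\delta v = \int (\nabla\psi)\lambda\cdot\delta v$, and the density term $-\int \rho\nabla\mu\cdot\delta v$. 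Collecting these yields the second equation $a\rho v + (\nabla\psi)\lambda - \rho\nabla\mu = 0$. In both cases the only effect of the dissipative formulation is the extra factor $a$ on the kinetic term.

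The remaining and most delicate step is the variation with respect to the mapping, which I would carry out following the mapping lemma of Appendix~\ref{app:stat_cond_nondissip}. The potential contributes $-\int\int_{\phi(\R^n)} b\,\nabla U(\phi)\cdot\delta\phi$, while the mapping multiplier term, after integrating by parts in both $t$ and $x$, contributes $-\int\int [\partial_t\lambda^T + \dv{v\lambda^T}]\widehat{\delta\psi}$ with $\widehat{\delta\psi} = \delta\psi\circ\psi$. I would then use the variational identity $\widehat{\delta\psi}(y) = -[D\psi(y)]\delta\phi(y)$, obtained by varying $\psi(\phi(x)) = x$, to convert the pairing into one against $\delta\phi$; setting the total to zero gives $(\nabla\psi)[\partial_t\lambda + \dv{v\lambda^T}^T] = b\,\nabla U(\phi)$, and applying $(\nabla\psi)^{-1}$ together with the identity $\dv{v\lambda^T}^T = (D\lambda)v + \lambda\dv{v}$ produces the first equation.

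I expect this mapping variation to be the sole genuine obstacle, exactly as in the non-dissipative derivation, since it is where the inverse reformulation \eqref{eq:transport_backward}, the integration by parts over the spatial domain, and the chain-rule identity for $\widehat{\delta\psi}$ all enter; the dissipative weights add nothing new because $b_t$ is spatially constant and merely multiplies $\nabla U(\phi)$. Finally I would note that the original constraints \eqref{eq:transport_backward} and \eqref{eq:continuity_eqn} are recovered as the stationary conditions with respect to the multipliers $\lambda$ and $\mu$, completing the list of stationary conditions.
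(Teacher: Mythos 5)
Your proposal is correct and follows essentially the same route as the paper's own proof: since $a$ and $b$ are functions of time alone, they factor through each variation, and the three stationary conditions follow by re-running the density, velocity, and mapping variations of Appendix~\ref{app:stat_cond_nondissip} with the kinetic term weighted by $a$ and $\nabla U(\phi)$ replaced by $b\nabla U(\phi)$ (the paper cites \eqref{eq:der_action_mapping}, \eqref{eq:var_action_vel}, and \eqref{eq:var_action_rho} for exactly this). Your expanded treatment of the mapping variation, including the identity $\widehat{\delta\psi} = -[D\psi]\delta\phi$ and $\dv{v\lambda^T}^T = (D\lambda)v + \lambda\dv{v}$, matches the argument of Lemma~\ref{eq:stationary_mapping} that the paper's proof invokes implicitly.
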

\begin{proof}
  Note that
  \begin{align*}
    \nabla[ b U ](\phi) &= b\nabla U(\phi)\\  
    \delta [ aT ] \cdot \delta \rho &= \int_{\phi(\R^n)} \frac 1 2 a |v|^2\delta \rho
                                      \ud x\\
    \delta [aT] \cdot \delta v &=  \int_{\phi(\R^n)}
                                 a \rho v\cdot\delta v \ud x.
  \end{align*}
  Therefore, using \eqref{eq:der_action_mapping} and replacing $\nabla
  U(\phi)$ with $b\nabla U(\phi)$, we have 
  \[
    \delta A \cdot \delta\phi = 
    \int\int_{ \phi(\R^n) } \left\{ (\nabla \psi) \left[  \partial_t \lambda +
        \dv{v\lambda^T}^T \right] - b\nabla U(\phi) \right\} \cdot \delta \phi
    \ud x \ud t,
  \]
  which yields the stationary condition on the mapping. Also, updating
  \eqref{eq:var_action_vel} yields
  \[
    \delta A \cdot \delta v = 
    \int\int_{\phi(\R^n)} [ a \rho v + (\nabla\psi)\lambda - \rho\nabla\mu ]
    \cdot \delta v \ud x \ud t,
  \]
  which yields the stationary condition for the velocity. Finally,
  updating \eqref{eq:var_action_rho} yields
  \[
    \delta A \cdot \delta\rho = \int\int_{\phi(\R^n)} \frac 1 2 a|v|^2
    \delta\rho - (\partial_t\mu + \nabla\mu\cdot v) \delta \rho \ud
    x\ud t,
  \]
  and that yields the last stationary condition.
\end{proof}

\begin{theorem}[Evolution Equations for the Path of Least Action:
  Dissipative Case]
  The evolution equations for the stationary conditions of the action
  in \eqref{eq:action_dissip_lagrange} is 
  \begin{equation}
    \rho[ \partial_t(av) + a(Dv)v ] = -b\nabla U(\phi).
  \end{equation}
\end{theorem}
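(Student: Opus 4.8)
The plan is to eliminate the Lagrange multipliers $\lambda$ and $\mu$ from the three stationary conditions of Theorem~\ref{thrm:stationary_lagrange_mult_dissip}, exactly mirroring the non-dissipative elimination in Appendix~\ref{app:velocity_evol}, but carrying the time-explicit weights $a$ and $b$ through the computation. The two structural lemmas of that appendix do almost all of the work; the only genuinely new calculation is a dissipative analogue of Lemma~\ref{lem:w_t_v_t}. First I would read off from the velocity stationary condition $a\rho v + (\nabla\psi)\lambda - \rho\nabla\mu = 0$ the substitution $w := (\nabla\psi)\lambda = \rho(\nabla\mu - av)$. This is the only place the factor $a$ enters the definition of $w$, and it collapses to the non-dissipative $w = \rho(\nabla\mu - v)$ when $a\equiv 1$.

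Next, I would invoke Lemma~\ref{lem:lambda_t_w_t}. That lemma is a purely differential-geometric identity relating $\partial_t\lambda + (D\lambda)v + \lambda\dv{v}$ to $(\nabla\psi)^{-1}[\,\partial_t w + (Dw)v + (\nabla v)w + w\dv{v}\,]$, and it does not use the particular form of $w$, so it applies verbatim. Combining it with the mapping stationary condition $\partial_t\lambda + (D\lambda)v + \lambda\dv{v} = b(\nabla\psi)^{-1}\nabla U(\phi)$ and cancelling $(\nabla\psi)^{-1}$ on both sides yields the clean intermediate identity $\partial_t w + (Dw)v + (\nabla v)w + w\dv{v} = b\,\nabla U(\phi)$.

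The substance of the argument is the recomputation of Lemma~\ref{lem:w_t_v_t} with the new $w = \rho(\nabla\mu - av)$. Differentiating $w$ in time and space, forming the same combination $\partial_t w + (Dw)v + (\nabla v)w + w\dv{v}$, and regrouping, the coefficient of $(\nabla\mu - av)$ is precisely $\partial_t\rho + \nabla\rho\cdot v + \rho\dv{v}$, which vanishes by the continuity equation \eqref{eq:continuity_eqn}. Among the surviving $\rho$-weighted terms, the three $\mu$-terms assemble into $\rho\,\nabla[\partial_t\mu + (D\mu)v]$; here I substitute the dissipative density condition $\partial_t\mu + (D\mu)v = \frac{1}{2}a|v|^2$, and since $a$ is a function of $t$ alone (so $\nabla a = 0$) this gradient equals $a(\nabla v)v$. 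That term cancels exactly against the $-a(\nabla v)v$ produced by the $(\nabla v)w$ contribution, leaving $-\rho[(\partial_t a)v + a\,\partial_t v + a(Dv)v]$. Recognizing $(\partial_t a)v + a\,\partial_t v = \partial_t(av)$ gives $\partial_t w + (Dw)v + (\nabla v)w + w\dv{v} = -\rho[\partial_t(av) + a(Dv)v]$.

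Equating the two expressions for $\partial_t w + (Dw)v + (\nabla v)w + w\dv{v}$ then gives $\rho[\partial_t(av) + a(Dv)v] = -b\,\nabla U(\phi)$, which is the claim. The step I expect to be the main obstacle is the book-keeping inside the dissipative version of Lemma~\ref{lem:w_t_v_t}: one must track the extra $(\partial_t a)v$ term that the non-dissipative proof did not have, and verify the $a(\nabla v)v$ cancellation. That cancellation hinges crucially on $a$ being spatially constant, so that the weighted density condition contributes only $a(\nabla v)v$ with no gradient-of-$a$ corrections; were $a$ allowed to depend on $x$, additional terms would survive and the clean form would fail.
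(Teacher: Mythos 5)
Your proposal is correct and takes essentially the same route as the paper's own proof: the paper likewise sets $w = \rho(\nabla\mu - av)$ from the dissipative velocity condition, reuses Lemma~\ref{lem:lambda_t_w_t} unchanged (it is indeed independent of the form of $w$), and redoes the Lemma~\ref{lem:w_t_v_t} computation with the weighted density condition $\partial_t\mu + (D\mu)v = \tfrac{1}{2}a|v|^2$, using exactly the cancellation $\nabla\bigl[\tfrac{1}{2}a|v|^2\bigr] = a(\nabla v)v$ that you flag as hinging on $a$ being a function of $t$ alone. Your bookkeeping of the extra $(\partial_t a)v$ term and the regrouping into $\partial_t(av)$ matches the paper's computation step for step, so there is no gap.
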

\begin{proof}
  Let $w = \rho( \nabla\mu-av)$ then 
  \begin{align*}
    \partial_t w &= (\partial_t\rho)(\nabla\mu - av) + \rho(
                   \nabla\partial_t\mu - \partial_t (av) ) \\
    Dw &= (\nabla\mu - av)(D\rho) + \rho[ D(\nabla\mu) - aDv ].
  \end{align*}
  Then
  \begin{align*}
    \partial_t w + (Dw)v + (\nabla v)w + w\dv{v} &= g
    (\nabla\mu - av)( \partial_t\rho +\nabla\rho\cdot v ) + 
    \rho[ \nabla\partial_t\mu - \partial_t (av) + D(\nabla\mu)v - a(Dv)v
    ] \\
    &+ 
    \rho(\nabla v)(\nabla\mu - av) + \rho(\nabla\mu - av)\dv{v} \\
    &= 
      (\nabla\mu - av)( \partial_t\rho +\nabla\rho\cdot v + \rho\dv{v}
      ) \\
    &+ 
    \rho[ \nabla\partial_t\mu - \partial_t (av) + D(\nabla\mu)v - a(Dv)v
      + (\nabla v)(\nabla\mu - av)
    ] \\
    &= \rho\left\{ -\partial_t(av) - a(Dv)v - a(\nabla v)v +
      \nabla[ \partial_t\mu + (D\mu)v  ] \right\} \\
    &= \rho\left\{ -\partial_t(av) - a(Dv)v \right\}.
  \end{align*}
  By Lemma~\ref{lem:lambda_t_w_t} and the previous expression, we have
  our result.
\end{proof}

\subsection{Discretization}\label{app:discretization}
We present the discretization of the velocity PDE
\eqref{eq:vel_evol_dissp_nesterov} first.  In one dimension, the terms
involving $v$ are Burger's equation, which is known to produce
shocks. We thus use an entropy scheme. Writing the PDE component-wise,
we get 
\begin{align} \label{eq:velocity_ev_comp}
  \partial_t v_1 &= -\frac 1 2 \partial_{x_1} (v_1)^2 -
                   v_2 \partial_{x_2} v_1 - \frac 3 t v_1 - \frac{1}{\rho} (\nabla U)_1 \\
  \partial_t v_2 &= -\frac 1 2 \partial_{x_2} (v_2)^2 -
                   v_1 \partial_{x_1} v_1 - \frac 3 t v_2 - \frac {1}{\rho} (\nabla U)_2,
\end{align}
where the subscript indicates the component of the vector. We use
forward Euler for the time derivative, and for the first term on the
right hand side, we use an entropy scheme for Burger's equation which
results in the following discretization:
\[
  \partial_{x_1} (v_1)^2(x) \approx
  \max\{ v_1(x), 0 \}^2 - \min\{ v_1(x), 0 \}^2 + 
  \min\{ v_1(x_1+\Delta x, x_2), 0 \}^2 - \max\{ v_1(x_1+\Delta x,
  x_2), 0 \}^2,
\]
where $\Delta x$ is the spatial sampling size, and the
$\partial_{x_2} (v_2)^2$ follows similarly. For the second term on the
right hand side of \eqref{eq:velocity_ev_comp}, we follow the
discretization of a transport equation using an up-winding scheme,
which yields the following discretization:
\[
  v_2(x) \partial_{x_2} v_1(x) \approx v_2(x) \cdot 
  \begin{cases}
    v_1( x_1, x_2 ) - v_1( x_1, x_2 -\Delta x ) & v_2(x) > 0 \\
    v_1( x_1, x_2 +\Delta x ) - v_1( x_1, x_2 ) & v_2(x) < 0 
  \end{cases}.
\]
With regards to the gradient of potential, if we use the potential
\eqref{eq:potential_illustrative}, then all the derivatives are
discretized using central differences, as the key term is a
diffusion.  The step size $\Delta t / \Delta x < 1/\max_x\{ |v(x)|,
|Dv(x)| \}$.

The backward map $\psi$ evolves according to a transport PDE
\eqref{eq:transport_backward}, and thus an up-winding scheme similar to
the transport term in the velocity term is used. For the
discretization of the continuity equation, we use a staggered grid (so
that the values of $v$ are defined in between grid points and $\rho$
is defined at the grid points). The discretization is just the sum of
the fluxes coming into the point:
\[
  -\dv{\rho(x)v(x)} \approx
  \sum_{i=1}^2 \left[
  -v_i(x)
  \begin{cases}
    \rho(x) & v_i(x) > 0 \\
    \rho(x +\Delta x_i) & v_i(x) <0
  \end{cases} + 
  v_i(x-\Delta x_i) 
  \begin{cases}
    \rho(x-\Delta x_i) & v_1(x-\Delta x_i) > 0 \\
    \rho(x) & v_1(x-\Delta x_i) < 0
  \end{cases}
  \right],
\]
where $\Delta x_i$ denotes the vector of the spatial increment
$\Delta x$ in the $i^{\text{th}}$ coordinate direction, $v_1(x)$
denotes the velocity defined at the midpoint between $(x_1,x_2)$ and
$(x_1+\Delta x, x_2)$, and $v_2(x) $ denotes the velocity defined at
the midpoint between $(x_1,x_2)$ and $(x_1, x_2+\Delta x)$. The term
$\partial_t\rho(x)$ is discretized with forward Euler. This scheme is
guaranteed to preserve mass.

\bibliographystyle{IEEEtran}
\bibliography{accel}

\end{document}